\newtheorem{openprob}{Open Problem}
\newtheorem{theorem}{Theorem}[section]
\newtheorem{lemma}[theorem]{Lemma}
\newtheorem{prop}[theorem]{Proposition}
\newtheorem{definition}[theorem]{Definition}
\newtheorem{remark}[theorem]{Remark}
\newtheorem{example}[theorem]{Example}
\newcommand {\bit}{\begin{itemize} \item}\newcommand {\eit}{\end{itemize}}
\newcommand {\ben}{\begin{enumerate} \item}\newcommand {\een}{\end{enumerate}}
\newcommand {\bena}{\begin{enumerate}\renewcommand{\labelenumi}{\alph{enumi}.}\item}
\newcommand {\beqn}{\begin{equation}}\newcommand {\eeqn}{\end{equation}}
\newcommand {\beqan}{\begin{eqnarray}}\newcommand {\eeqan}{\end{eqnarray}}
\newcommand {\beqa}{\begin{eqnarray*}}\newcommand {\eeqa}{\end{eqnarray*}}
\newcommand {\barr}{\begin{array}}\newcommand {\earr}{\end{array}}
\newcommand {\bat}{\begin{tabular}}\newcommand {\eat}{\end{tabular}}
\begin{document}

\title{Polynomial Norms}

\author{
Amir Ali Ahmadi\thanks{Amir Ali Ahmadi. ORFE, Princeton University, Sherrerd Hall, Princeton, NJ 08540, USA. Email: a\_a\_a@princeton.edu}    \footnotemark[4]  \and
Etienne de Klerk \thanks{Etienne de Klerk. Department Econometrics and Operations Research, TISEM, Tilburg University, 5000LE Tilburg, The Netherlands. Email: e.deklerk@uvt.nl} \and Georgina Hall \thanks{Georgina Hall, \textit{corresponding author.} ORFE, Princeton University, Sherrerd Hall, Princeton, NJ 08540, USA. Email: gh4@princeton.edu}  \thanks{Amir Ali Ahmadi and Georgina Hall are partially supported by the DARPA Young Faculty Award, the Young Investigator Award of the AFOSR, the CAREER Award of the NSF, the Google Faculty Award, and the Sloan Fellowship.}}

\date{}

\maketitle

\begin{abstract}
\noindent
In this paper, we study \emph{polynomial norms}, i.e. norms that are the $d^{\text{th}}$ root of a degree-$d$ homogeneous polynomial $f$. We first show that a necessary and sufficient condition for $f^{1/d}$ to be a norm is for $f$ to be strictly convex, or equivalently, convex and positive definite. Though not all norms come from $d^{\text{th}}$ roots of polynomials, we prove that any norm can be approximated arbitrarily well by a polynomial norm. We then investigate the computational problem of testing whether a form gives a polynomial norm. We show that this problem is strongly NP-hard already when the degree of the form is 4, but can always be answered {\ghh by solving a hierarchy of semidefinite programs}. We further study the problem of optimizing over the set of polynomial norms using semidefinite programming. To do this, we introduce the notion of \emph{r-sos-convexity} and extend a result of Reznick on sum of squares representation of positive definite forms to positive definite biforms. We conclude with some applications of polynomial norms to statistics and dynamical systems.

\end{abstract}

\small{\noindent \textit{Keywords:} polynomial norms, sum of squares polynomials, convex polynomials, semidefinite programming

\noindent \textit{AMS classification:} 90C22, 14P10, 52A27}

\section{Introduction}

A function $f:\mathbb{R}^n \rightarrow \mathbb{R}$ is a \emph{norm} if it satisfies the following three properties:
\begin{enumerate}[(i)]
	\item positive definiteness: $f(x)>0, ~\forall x \neq 0,$ and $f(0)=0$.
	\item $1$-homogeneity: $f(\lambda x)=|\lambda| f(x),~ \forall x\in \mathbb{R}^n, ~\forall \lambda \in \mathbb{R}$.
	\item triangle inequality: $f(x+y)\leq f(x)+f(y), ~\forall x,y \in \mathbb{R}^n.$
\end{enumerate}
Some well-known examples of norms include the $1$-norm, $f(x)=\sum_{i=1}^n |x_i|$, the $2$-norm, $f(x)=\sqrt{\sum_{i=1}^n x_i^2}$, and the $\infty$-norm, $f(x)=\max_{i} |x_i|.$ Our focus throughout this paper is on norms that can be derived from multivariate polynomials. More specifically, we are interested in establishing conditions under which the $d^{th}$ root of a homogeneous polynomial of degree $d$ is a norm, where $d$ is an even number. We refer to the norm obtained when these conditions are met as \emph{a polynomial norm}. It is straightforward to see why we restrict ourselves to $d^{th}$ roots of degree-$d$ homogeneous polynomials. Indeed, nonhomogeneous polynomials cannot hope to satisfy the homogeneity condition of a norm and homogeneous polynomials of degree $d>1$ are not 1-homogeneous unless we take their $d^{th}$ root. The question of when the square root of a homogeneous quadratic polynomial is a norm (i.e., when $d=2$) has a well-known answer (see, e.g., \cite[Appendix A]{BoydBook}): a function $f(x)=\sqrt{x^TQx}$ is a norm if and only if the symmetric $n \times n$ matrix $Q$ is positive definite. In the particular case where $Q$ is the identity matrix, one recovers the $2$-norm. Positive definiteness of $Q$ can be checked in polynomial time using for example Sylvester's criterion (positivity of the $n$ leading principal minors of $Q$). This means that testing whether the square root of a quadratic form is a norm can be done in polynomial time. A similar characterization in terms of conditions on the coefficients are not known for polynomial norms generated by forms of degree greater than 2. In particular, it is not known whether one can efficiently test membership or optimize over the set of polynomial norms.

\paragraph{Outline and contributions.} In this paper, we study polynomial norms from a computational perspective. {\rev In Section \ref{sec:eq.charac.comp}, we give two different necessary and sufficient conditions under which the $d^{th}$ root of a degree-$d$ form $f$ will be a polynomial norm: namely, that $f$ be strictly convex, or (equivalently) that $f$ be convex and postive definite (Theorem \ref{th:norm.str.conv})}. Section \ref{sec:approx.norms} investigates the relationship between general norms and polynomial norms: while many norms are polynomial norms (including all $p$-norms with $p$ even), some norms are not (consider, e.g., the $1$-norm). We show, however, that any norm can be approximated to arbitrary precision by a polynomial norm (Theorem \ref{th:approx.poly.norm.sphere}). {\ghh While it is well known that polynomials can approximate continuous functions on compact sets arbitrarily well, the approximation result here needs to preserve the convexity and homogeneity properties of the original norm, and hence does not follow, e.g., from the Stone-Weierstrass theorem.} In Section \ref{sec:sos.approx}, we move on to complexity results and show that simply testing whether the $4^{th}$ root of a quartic form is a norm is strongly NP-hard (Theorem \ref{th:NP.hard}). {\ghh We then provide a semidefinite programming-based hierarchy for certifying that the $d^{th}$ root of a degree $d$ form is a norm (Theorem \ref{th:test.poly.norm}) and for optimizing over a subset of the set of polynomial norms (Theorem \ref{th:opt.poly.norms}).} The latter is done by introducing the concept of $r$-sum of squares-convexity (see Definition \ref{def:r.sos.convex}). We show that any form with a positive definite Hessian is $r$-sos-convex for some value of $r$, and present a lower bound on that value (Theorem \ref{th:r.sos.convex}). We also show that the level $r$ of the semidefinite programming hierarchy cannot be bounded as a function of the number of variables and the degree only (Theorem \ref{th:unif.bnd}). Finally, we cover some applications of polynomial norms in statistics and dynamical systems in Section \ref{sec:apps}. In Section \ref{sec:norm.reg}, we compute approximations of two different types of norms, polytopic gauge norms and $p$-norms with $p$ noneven, using polynomial norms. The techniques described in this section can be applied to norm regression. In Section \ref{sec:JSR.comp}, we use polynomial norms to prove stability of a switched linear system, a task which is equivalent to computing an upperbound on the joint spectral radius of a family of matrices.

\section{Two equivalent characterizations of polynomial norms} \label{sec:eq.charac.comp}
We start this section with {\rev a theorem} that provides conditions under which the $d^{th}$ root of a degree-$d$ form is a norm.
{\ghh  We will use this theorem in Section \ref{sec:sos.approx} to establish
	semidefinite programming-based approximations of polynomial norms. We remark that this result is generally assumed to be known by the optimization community. Indeed, some prior work on polynomial norms has been done by Dmitriev and Reznick in \cite{dmitriev1973structure,dmitriev1991extreme,reznick1979banach,Blenders_Reznick}. For completeness of presentation, however, and as we could not find the exact statement of this result in the form we present, we include it here with alternative proofs.
Throughout this paper, we suppose that the number of variables $n$ is larger or equal than $2$ and that $d$ is a positive even integer.}

{\rev

\begin{theorem}\label{th:norm.str.conv}
Let $f$ be a form of degree $d$ in $n$ variables. The following statements are equivalent: 
\begin{enumerate}[(i)]
	\item The function $f^{\frac{1}{d}}$ is a norm on $\mathbb{R}^n$. 
	\item The function $f$ is convex and positive definite. 
	\item The function $f$ is strictly convex,  i.e., $$f(\lambda x+ (1-\lambda)y)<\lambda f(x)+(1-\lambda)f(y),~\forall x\neq y,~\forall \lambda \in (0,1).$$
\end{enumerate} 
\end{theorem}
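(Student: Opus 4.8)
The plan is to use statement (ii) as a hub and prove the two equivalences $(ii)\Leftrightarrow(iii)$ and $(ii)\Leftrightarrow(i)$ separately; throughout I set $g:=f^{1/d}$ and use repeatedly that $d$ is even, so that $f(\lambda x)=|\lambda|^d f(x)$ and $f\ge 0$ under positive definiteness.

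For $(iii)\Rightarrow(ii)$, convexity is immediate, and positive definiteness follows by applying the strict inequality to the distinct points $x$ and $-x$ at $\lambda=\tfrac12$: since $f(0)=0$ and $f(-x)=f(x)$ this gives $0<f(x)$ for every $x\neq 0$. The reverse implication $(ii)\Rightarrow(iii)$ is the first interesting step. I would argue by contradiction: if $f$ is convex but not strictly convex, then the convexity inequality holds with equality at some interior point of a segment $[x,y]$ with $x\neq y$. A standard property of convex functions (a convex function that meets its chord at an interior point coincides with that chord on the whole segment) then forces $\lambda\mapsto f(\lambda x+(1-\lambda)y)$ to be affine on $[0,1]$; being the restriction of the polynomial $f$ to a line, it must in fact be affine on all of $\mathbb{R}$. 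Writing the line as $s\mapsto y+s(x-y)$, however, the univariate polynomial $s\mapsto f(y+s(x-y))$ has leading coefficient $f(x-y)$, which is strictly positive by positive definiteness since $x-y\neq 0$; as $d\ge 2$ this contradicts affineness.

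For $(i)\Rightarrow(ii)$ I would use that a norm is convex and nonnegative, and that $t\mapsto t^d$ is convex and nondecreasing on $[0,\infty)$, so the composition $f=g^d$ is convex; positive definiteness of $f$ is immediate from $g(x)>0$ for $x\neq0$. The converse $(ii)\Rightarrow(i)$ contains the crux of the theorem. The positive-definiteness and $1$-homogeneity axioms for $g$ are direct: the first from $f>0$ off the origin, the second from $g(\lambda x)=(|\lambda|^d f(x))^{1/d}=|\lambda|\,g(x)$. The triangle inequality $f(x+y)^{1/d}\le f(x)^{1/d}+f(y)^{1/d}$ is the hard part, and rather than manipulate $d$-th roots directly I would realize $g$ as a Minkowski gauge. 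Set $S:=\{x:f(x)\le1\}$. Convexity of $f$ makes $S$ convex, while positive definiteness and homogeneity make $S$ bounded (hence compact) with $0$ in its interior; its gauge $\gamma_S(x)=\inf\{t>0:x/t\in S\}$ is therefore a convex, $1$-homogeneous function. The computation $x/t\in S\iff f(x)/t^d\le1\iff t\ge f(x)^{1/d}$ shows $\gamma_S=g$, and a convex $1$-homogeneous function is automatically subadditive, which is exactly the triangle inequality.

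I expect the triangle inequality in $(ii)\Rightarrow(i)$ to be the main obstacle: passing through the gauge is what makes it painless, as it reduces a Minkowski-type inequality for $d$-th roots to the convexity of a single sublevel set. The only point requiring a little care is the compactness of $S$, which relies on $f$ attaining a strictly positive minimum on the unit sphere (positive definiteness together with compactness of the sphere); every other ingredient is a standard fact about convex functions invoked above.
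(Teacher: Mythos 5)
Your proposal is correct, and while it reaches the same destination as the paper, two of your steps take genuinely different routes. The paper proves the cycle $(i)\Rightarrow(ii)\Rightarrow(iii)\Rightarrow(i)$, whereas you use $(ii)$ as a hub; that structural difference is immaterial, but the content differs in the two key implications. For $(ii)\Rightarrow(iii)$, both you and the paper restrict $f$ to the line through the offending points and use the chord argument to force the restriction to be affine; the paper then finishes analytically, arguing the affine function must be constant (positivity kills a nonzero slope) and contradicting $\lim_{\alpha\to\infty} f(\bar x + \alpha(\bar y - \bar x))=\infty$, which it establishes separately via the minimum of $f$ on the unit sphere. Your finish is purely algebraic: the univariate polynomial $s\mapsto f\bigl(y+s(x-y)\bigr)$ has leading coefficient $f(x-y)>0$ by positive definiteness, hence exact degree $d\ge 2$, contradicting affineness --- shorter, and it avoids the growth-at-infinity lemma entirely. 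For the triangle inequality, your Minkowski-gauge argument and the paper's sublevel-set argument are the same idea in different dress: the paper works from $(iii)$, notes $S_g=S_f$ is convex by quasi-convexity, and directly writes $\frac{x+y}{g(x)+g(y)}$ as a convex combination of $x/g(x)$ and $y/g(y)$; your identity $\gamma_S=g$ together with sublinearity of gauges packages exactly that computation (and your compactness of $S$, while true, is not actually needed once $\gamma_S=g$ is verified pointwise). Finally, your positivity arguments are more elementary than the paper's: where the paper invokes the first-order characterization of strict convexity, $f(y)>f(x)+\nabla f(x)^T(y-x)$ at $x=0$, you use evenness of $d$ to get $0=f(0)<\tfrac12 f(x)+\tfrac12 f(-x)=f(x)$, needing no differentiability or gradient computation. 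What the paper's route buys in exchange is the explicit growth estimate $f(x)\ge R^d f(x^*)$, which it reuses rhetorically, but as a proof of the theorem your version is tighter at both of these points.
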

}

\begin{proof}
	$(i)\Rightarrow (ii)$
		If $f^{1/d}$ is a norm, then $f^{1/d}$ is positive definite, and so is $f$. Furthermore, any norm is convex and the $d^{th}$ power of a nonnegative convex function remains convex.
		
		$(ii) \Rightarrow (iii)$ Suppose that $f$ is convex, positive definite, but not strictly convex, i.e., there exists $\bar{x},\bar{y} \in \mathbb{R}^n$ with $\bar{x}\neq \bar{y}$, and $\gamma \in (0,1)$ such that $$f\left(\gamma \bar{x}+(1-\gamma)\bar{y} \right)=\gamma f(\bar{x})+ (1-\gamma) f(\bar{y}).$$ Let $g(\alpha)\mathrel{\mathop{:}}=f(\bar{x}+\alpha(\bar{y}-\bar{x})).$ Note that $g$ is a restriction of $f$ to a line and, consequently, $g$ is a convex, positive definite, univariate polynomial in $\alpha$. We now define \begin{align}\label{eq:expression.g}
		h(\alpha)\mathrel{\mathop{:}}=g(\alpha)-(g(1)-g(0))\alpha-g(0).
			\end{align} Similarly to $g$, $h$ is a convex univariate polynomial as it is the sum of two convex univariate polynomials. We also know that $h(\alpha)\geq 0, \forall \alpha \in (0,1)$. Indeed, by convexity of $g$, we have that $g(\alpha x+(1-\alpha)y)\geq \alpha g(x)+(1-\alpha)g(y), \forall x,y \in \mathbb{R}$ and $\alpha \in (0,1)$. This inequality holds in particular for $x=1$ and $y=0$, which proves the claim. Observe now that $h(0)=h(1)=0$. By convexity of $h$ and its nonnegativity over $(0,1)$, we have that $h(\alpha)=0$ on $(0,1)$ which further implies that $h=0$. Hence, from (\ref{eq:expression.g}), $g$ is an affine function. As $g$ is positive definite, it cannot be that $g$ has a nonzero slope, so $g$ has to be a constant. But this contradicts that $\lim_{\alpha \rightarrow \infty} g(\alpha) =\infty.$ To see why this limit must be infinite,  we show that $\lim_{||x|| \rightarrow \infty} f(x)=\infty.$ As $\lim_{\alpha \rightarrow \infty} ||\bar{x}+\alpha(\bar{y}-\bar{x})||=\infty$ and $g(\alpha)=f(\bar{x}+\alpha(\bar{y}-\bar{x}))$, this implies that $\lim_{\alpha \rightarrow \infty} g(\alpha)=\infty.$ To show that $\lim_{||x|| \rightarrow \infty} f(x)=\infty$, let $$x^*=\underset{||x||=1}{\text{argmin }} f(x).$$ By positive definiteness of $f$, $f(x^*)>0.$ Let $M$ be any positive scalar and define $R\mathrel{\mathop{:}}=(M/f(x^*))^{1/d}$. Then for any $x$ such that $||x||=R$, we have $$f(x)\geq \min_{||x||=R} f(x) \geq R^d f(x^*)=M,$$ where the second inequality holds by homogeneity of $f.$ Thus $\lim_{||x|| \rightarrow \infty} f(x)=\infty$.
		
		$(iii) \Rightarrow (i)$ Homogeneity of $f^{1/d}$ is immediate. Positivity follows from the first-order characterization of strict convexity: $$f(y)>f(x)+\nabla f(x)^T(y-x), ~\forall y\neq x.$$ Indeed, for $x=0$, the inequality becomes $f(y)>0, ~\forall y\neq 0 $, as $f(0)=0$ and $\nabla f(0)=0$. Hence, $f$ is positive definite, and so is $f^{1/d}$. It remains to prove the triangle inequality. Let $g\mathrel{\mathop{:}}=f^{1/d}$. Denote by $S_f$ and $S_g$ the 1-sublevel sets of $f$ and $g$ respectively. It is clear that $$S_g=\{x~|~f^{1/d}(x)\leq 1\}=\{x~|~f(x)\leq 1\}=S_f,$$
			and as $f$ is strictly convex (and hence quasi-convex), $S_f$ is convex and so is $S_g$. Let $x,y \in \mathbb{R}^n.$ We have that $\frac{x}{g(x)} \in S_g$ and $\frac{y}{g(y)} \in S_g$. From convexity of $S_g$,
			$$g \left( \frac{g(x)}{g(x)+g(y)} \cdot \frac{x}{g(x)}+\frac{g(y)}{g(x)+g(y)} \cdot \frac{y}{g(y)}  \right) \leq 1.$$
			Homogeneity of $g$ then gives us $$\frac{1}{g(x)+g(y)}g(x+y)\leq 1$$ which shows that triangle inequality holds.
\end{proof}

\section{Approximating norms by polynomial norms}  \label{sec:approx.norms}

It is easy to see that not all norms are polynomial norms. For example,
 the 1-norm $||x||_1=\sum_{i=1}^n |x_i|$ is not a polynomial norm. Indeed,
  all polynomial norms are differentiable at all but one point (the origin) whereas
  the 1-norm is nondifferentiable whenever one of the components of $x$ is equal to zero. In this section,
  we show that, though not every norm is a polynomial norm, any norm can be approximated to
   arbitrary precision by a polynomial norm (Theorem \ref{th:approx.poly.norm.sphere}).
     A related result is given by Barvinok in \cite{barvinok2003}. In that paper, he shows that any norm can be approximated by the $d$-th root of a nonnegative
      degree-$d$ form, and quantifies the quality of the approximation as a function of $n$ and $d$.
      The form he obtains however is not shown to be convex. In fact, in a
       later work \cite[Section 2.4]{barvinok2006computational}, Barvinok points out that it would be an interesting question
       to know whether any norm can be approximated by the $d^{th}$ root of a convex form with the same quality of approximation as for $d$-th roots of nonnegative forms.
        The result below is a step in that direction, although the quality of approximation is weaker than that by Barvinok \cite{barvinok2003}. {\rev We note that the form in Barvinok's construction is a sum of squares of other forms. Such forms are not necessarily convex. By contrast, the form that we construct is a sum of powers of linear forms and hence always convex. }

\begin{theorem} \label{th:approx.poly.norm.sphere}
{\ghh 	Let $|| \cdot||$ be any norm on $\mathbb{R}^n$. Then,
\footnote{We would like to thank an anonymous referee for suggesting the proof of part (i) of this theorem. We were previously showing that for any norm $|| \cdot||$ on $\mathbb{R}^n$
 and for any $\epsilon>0$, there exist an even integer $d$ and an $n$-variate positive definite form $f_d$ of degree $d$, which is a sum of powers of linear forms, and such that
	 	\begin{align*}
	 	(1-\epsilon)||x|| \leq f_d^{1/d}(x) \leq ||x||,~ \forall x \in \mathbb{R}^n.
	 	\end{align*}}
 for any even integer}
 $d \ge 2$:
 \begin{enumerate}[(i)]
 \item
 There exists an $n$-variate convex positive definite form $f_d$ of degree $d$ such that
	\begin{align}\label{eq:norm.approx.level.set}
	\frac{d}{n+d}\left(\frac{n}{n+d}\right)^{n/d}||x|| \leq f_d^{1/d}(x) \leq  ||x||, \quad~ \forall x \in \mathbb{R}^n.
	\end{align}
In particular, for any sequence $\{f_d\}$ $(d= 2,4,6,\ldots)$ of such polynomials one has
\[
\lim_{d\rightarrow \infty} {f_d^{1/d}(x) \over ||x||} = 1 \quad~ \forall x \in \mathbb{R}^n.
\]
\item
One may assume without loss of generality that $f_d$ in (i) is a nonnegative sum of $d^{\text{th}}$ powers of linear forms.
\end{enumerate}
\end{theorem}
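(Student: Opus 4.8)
The plan is to produce a single form that simultaneously settles both parts: construct $f_d$ as an average of $d^{\text{th}}$ powers of linear forms, which is automatically convex, and whose sandwich inequality I verify directly. Concretely, let $\|\cdot\|_*$ be the dual norm and $B^*=\{y:\|y\|_*\le 1\}$ its (compact, symmetric, full-dimensional) unit ball, and set
\[
f_d(x)=\frac{1}{\mathrm{vol}(B^*)}\int_{B^*}(y^Tx)^d\,dy .
\]
Since $d$ is even, each integrand $(y^Tx)^d$ is a convex $d$-form, so $f_d$ is a convex $d$-form; positive definiteness will follow from the lower bound below. The upper bound is immediate: for $y\in B^*$ one has $|y^Tx|\le\|y\|_*\,\|x\|\le\|x\|$, hence $(y^Tx)^d\le\|x\|^d$, and averaging gives $f_d(x)\le\|x\|^d$, i.e. $f_d^{1/d}(x)\le\|x\|$.

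The heart of the argument, and the step I expect to be the main obstacle, is the lower bound with the exact constant. By homogeneity I may assume $\|x\|=1$. Biduality gives $\sup_{y\in B^*}y^Tx=\|x\|=1$, attained at some $y_0\in B^*$ by compactness. Writing $Z=y^Tx$ for $y$ uniform on $B^*$, the key estimate is a tail/concentration bound obtained from an affine contraction toward $y_0$: for $s\in[0,1]$ the map $y\mapsto sy_0+(1-s)y$ sends $B^*\cap\{Z\ge 0\}$ into $B^*\cap\{Z\ge s\}$ (convexity gives the inclusion in $B^*$, and $Z\mapsto s+(1-s)Z\ge s$ gives the level-set inclusion) while scaling volume by $(1-s)^n$; combined with the symmetry $\mathrm{vol}(B^*\cap\{Z\ge0\})=\tfrac12\mathrm{vol}(B^*)$ this yields $\Pr[Z\ge s]\ge\tfrac12(1-s)^n$. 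Because $d$ is even, $Z^d\ge s^d\,\mathbf{1}[|Z|\ge s]$, and the symmetry $y\mapsto-y$ gives $\Pr[|Z|\ge s]=2\Pr[Z\ge s]\ge(1-s)^n$. Taking expectations, $f_d(x)=\mathbb{E}[Z^d]\ge s^d(1-s)^n$ for every $s$, and optimizing at $s_*=d/(n+d)$ produces $f_d(x)\ge\max_{s\in[0,1]}s^d(1-s)^n=\big(\tfrac{d}{n+d}\big)^d\big(\tfrac{n}{n+d}\big)^n$, which is exactly the claimed bound after taking $d^{\text{th}}$ roots. The delicate point here is not the volume scaling but the bookkeeping that lands on the precise constant: using the two-sided event $\{|Z|\ge s\}$ is what cancels the factor $\tfrac12$ from the one-sided contraction, and it is this cancellation (rather than a cruder Markov-type threshold or a Brunn--Minkowski slicing estimate, which I would expect to lose a factor) that gives the sharp $\max_s s^d(1-s)^n$.

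The limiting statement then follows by a squeeze, since $\tfrac{d}{n+d}\big(\tfrac{n}{n+d}\big)^{n/d}\to 1$ as $d\to\infty$ (the first factor tends to $1$, and the second has logarithm $\tfrac{n}{d}\log\tfrac{n}{n+d}\to 0$), so the lower and upper bounds both force $f_d^{1/d}(x)/\|x\|\to 1$ pointwise.

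For part (ii), I would observe that $f_d$ as constructed lies in the convex cone generated by $\{(y^Tx)^d:y\in\mathbb{R}^n\}$ inside the finite-dimensional space of $n$-variate $d$-forms, which has dimension $\binom{n+d-1}{d}$. Applying Carathéodory's theorem for convex cones expresses $f_d$ as a \emph{finite} nonnegative combination $\sum_i \lambda_i (y_i^Tx)^d$ with $\lambda_i\ge 0$; absorbing $\lambda_i^{1/d}$ into $y_i$ rewrites this as a nonnegative sum of $d^{\text{th}}$ powers of linear forms. Since the resulting form equals $f_d$ identically, every property established in part (i) — the two-sided bound, convexity, and positive definiteness — is preserved, which is exactly the ``without loss of generality'' assertion.
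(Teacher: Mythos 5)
Your proof is correct and follows essentially the same route as the paper: part (i) is exactly the paper's argument (the same $f_d$ averaging $\langle x,y\rangle^d$ over the polar/dual ball, the same affine contraction toward the maximizer $y_0$ giving $\Pr[|Z|\ge s]\ge (1-s)^n$, and the same optimization at $s=d/(n+d)$, with your two-sided event $\{|Z|\ge s\}$ playing the role of the paper's $A_+(\alpha)\cup A_-(\alpha)$). For part (ii), where the paper cites a moment-matching lemma of Rogosinski (existence of an atomic measure on $B^\circ$ matching the degree-$d$ moments of the normalized Lebesgue measure), you instead note that $f_d$ is a barycenter in the finite-dimensional space of degree-$d$ forms and apply Carath\'eodory directly; this is substantively the same argument, since that moment lemma is itself proved by exactly such a convexity/Carath\'eodory reasoning---just make sure to justify that the barycenter of a measure on the compact set $\left\{(y^Tx)^d : y\in B^\circ\right\}$ lies in its convex hull (a one-line separation argument), since membership in the non-closed cone is not literally immediate from the integral representation.
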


\begin{proof}[Proof of (i)]
Fix any norm $\|\cdot \|$  on $\mathbb{R}^n$. We denote the Euclidean inner product  on $\mathbb{R}^n$ by $\langle \cdot,\cdot\rangle$, and
the unit ball with respect to $\|\cdot \|$ by
\[
B = \left\{x \in \mathbb{R}^n \; | \; \|x\| \le 1 \right\}.
\]
We denote the polar of $B$ with respect to $\langle \cdot,\cdot\rangle$ by
\[
B^\circ = \left\{y \in \mathbb{R}^n \; | \; \langle x,y\rangle  \le 1 \; \forall x \in B\right\}.
\]
Recall that $B^\circ$ is symmetric around the origin, because $B$ is.
One may express the given norm in terms of the polar as follows (see e.g.\ relation (3.1) in \cite{barvinok2003}):
\begin{equation}\label{norm ito polar}
\|x\| = \max_{y \in B^\circ} \langle x,y\rangle = \max_{y \in B^\circ} |\langle x,y\rangle| \quad \forall x \in \mathbb{R}^n.
\end{equation}
For given, even integer $d$, we define the polynomial
\begin{equation}\label{def:f_d}
  f_d(x) = \frac{1}{\mbox{vol} B^\circ} \int_{B^\circ} \langle x,y\rangle^d dy.
\end{equation}
Note that $f_d$ is indeed a convex form of degree $d$. In fact, we will show later on that $f_d$ may in fact be written as a nonnegative sum
of $d$th powers of linear forms.

By \eqref{norm ito polar}, one has
\[
f_d^{1/d}(x) \le \|x\| \quad~ \forall x \in \mathbb{R}^n.
\]
Now fix $x_0 \in \mathbb{R}^n$ such that $\|x_0\| = 1$. By \eqref{norm ito polar},
there exists a $y_0 \in B^\circ$ so that $\langle x_0,y_0\rangle = 1$.
Define the half-space
\[
H_+ = \left\{y \in \mathbb{R}^n \; | \; \langle x_0,y\rangle \ge 0  \right\}.
\]
Then, by symmetry,
\[
\mbox{vol}\left( H_+ \cap B^\circ\right) = \frac{1}{2} \mbox{vol}\left( B^\circ\right).
\]
For any $\alpha \in (0,1)$ we now define
\[
A_+(\alpha) = \left\{(1-\alpha)y + \alpha y_0 \; | \; y \in H_+\cap B^\circ \right\}.
\]
Then $A_+(\alpha) \subset B^\circ $, and
\[
\mbox{vol} A_+(\alpha) = \frac{1}{2} (1-\alpha)^n \mbox{vol}\left(B^\circ\right).
\]
Moreover
\begin{equation}
\label{ineq:Aplus}
\langle x_0,y\rangle \ge \alpha \quad \forall y \in A_+(\alpha),
\end{equation}
and
\begin{equation}
\label{eq:zero vol}
\mbox{vol} \left(A_+(\alpha)\cap A_{-}(\alpha)\right)=0.
\end{equation}
Letting $A_{-}(\alpha) = - A_+(\alpha)$, by symmetry one has $A_{-}(\alpha) \subset B^\circ$,
and
\begin{equation}
\label{ineq:Amin}
\langle x_0,y \rangle \le -\alpha \quad \forall y \in A_{-}(\alpha).
\end{equation}
Thus
\begin{eqnarray*}
  f^{1/d}(x_0)  &=& \left(\frac{1}{\mbox{vol} B^\circ} \int_{B^\circ} \langle x_0,y\rangle^d dy\right)^{1/d} \\
   &\ge &  \left(\frac{1}{\mbox{vol} B^\circ} \int_{A_+(\alpha)\cup A_{-}(\alpha)} \langle x_0,y\rangle^d dy\right)^{1/d}  \\
   &\ge & \left(\frac{\mbox{vol} A_+(\alpha)+\mbox{vol} A_-(\alpha)}{\mbox{vol} B^\circ} \alpha^d\right)^{1/d} \quad \mbox{(by \eqref{ineq:Aplus}, \eqref{ineq:Amin}, and \eqref{eq:zero vol})}\\
   &=& \alpha(1-\alpha)^{n/d}.
\end{eqnarray*}
The last expression is maximized by $\alpha = \frac{d}{n+d}$, yielding the leftmost inequality in \eqref{eq:norm.approx.level.set} in the statement of the theorem.
Finally, note that,
\[
  \lim_{d \rightarrow \infty} \frac{d}{n+d}\left(\frac{n}{n+d}\right)^{n/d}  = \lim_{t \downarrow 0} t^t (1+t)^{-(1+t)} = 1,\\
  \]
as required.
\end{proof}

For the proof of the second part of the theorem, we need a result concerning finite moment sequences of signed measures, given as the next lemma. {\ghh

\begin{lemma}[ \cite{rogosinski}, see Lemma 3.1 in \cite{Shapiro2001} for a simple proof]
\label{lemma:Shapiro}
Let $\Omega \subset \mathbb{R}^n$ be Lebesgue-measurable, and let $\mu$ be the normalized Lebesgue measure on $\Omega$, i.e.\ $\mu(\Omega) = 1$.
Denote the moments of $\mu$ by
\begin{equation}\label{def:moment alpha}
  m_\mu(\alpha) = \int_\Omega x^\alpha d\mu(x) \quad \forall \alpha \in \mathbb{N}_0^n,
\end{equation}
where $x^\alpha := \prod_i x_i^{\alpha_i}$ if $x = (x_1,\ldots,x_n)$ and $\alpha = (\alpha_1,\ldots,\alpha_n)$.
Let $S \subset \mathbb{N}_0^n$ be finite. Then there exists an atomic probability measure, say $\mu'$, supported on at most $|S|$ points in $\Omega$, such that
\[
m_\mu(\alpha) = m_{\mu'}(\alpha) \quad \forall \alpha \in S.
\]
\end{lemma}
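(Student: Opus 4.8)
The plan is to turn the statement into a finite-dimensional convexity fact. Write $S=\{\alpha^{(1)},\dots,\alpha^{(N)}\}$ with $N=|S|$, and define the moment map $\psi\colon\Omega\to\R^N$ by $\psi(x)=(x^{\alpha^{(1)}},\dots,x^{\alpha^{(N)}})$. Since the moments $m_\mu(\alpha)$ for $\alpha\in S$ are finite, $\psi$ is $\mu$-integrable, and the vector $m=(m_\mu(\alpha^{(1)}),\dots,m_\mu(\alpha^{(N)}))=\int_\Omega\psi\,d\mu$ is exactly the $\mu$-barycenter of the image $\psi(\Omega)\subseteq\R^N$. Finding an atomic probability measure $\mu'=\sum_i\lambda_i\delta_{x_i}$ with $x_i\in\Omega$, $\lambda_i\ge 0$, $\sum_i\lambda_i=1$ that matches all moments in $S$ is then precisely the same as writing $m$ as a convex combination $m=\sum_i\lambda_i\psi(x_i)$ of points of $\psi(\Omega)$, the number of atoms being the number of points used. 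Thus it suffices to produce a short Carathéodory representation of the barycenter $m$ using points of $\psi(\Omega)$.

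Granting that $m$ lies in the genuine convex hull $\conv{\psi(\Omega)}$ (and not merely in its closure), Carathéodory's theorem writes $m$ as a convex combination of at most $N+1$ points $\psi(x_1),\dots,\psi(x_{N+1})$ with $x_i\in\Omega$, and pulling back yields a probability measure $\mu'$ on at most $N+1$ points of $\Omega$ that matches every moment in $S$. To reach the stated bound $|S|$, I would use the conic form of Carathéodory's theorem in $\R^{|S|}$ together with the normalization: when the constant function lies in the linear span of $\{x^\alpha:\alpha\in S\}$ (for instance whenever $0\in S$, in which case one coordinate of $\psi$ is identically $1$), the image $\psi(\Omega)$ lies in an affine hyperplane, the coordinate equation attached to that constant enforces $\sum_i\lambda_i=1$ automatically, and conic Carathéodory in dimension $|S|$ returns at most $|S|$ atoms, all in $\Omega$. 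This is exactly the point at which the constant in the statement is delicate: folding the normalization into one of the $|S|$ coordinate equations is what improves the naive bound $|S|+1$ to $|S|$.

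The main obstacle is the parenthetical hypothesis above, namely showing that the barycenter $m$ lies in the actual convex hull of $\psi(\Omega)$, so that the Carathéodory points can be taken to be images of points of $\Omega$ rather than limit points in $\overline{\psi(\Omega)}$. A separating-hyperplane argument only yields $m\in\overline{\convx}\,\psi(\Omega)$, which is insufficient because the resulting atoms might fall outside $\Omega$. I would instead argue by induction on the dimension $N$, working directly with the pushforward of $\mu$, which assigns full mass to $\psi(\Omega)$: after passing to the affine hull of $\psi(\Omega)$, either $m$ lies in the relative interior of $\conv{\psi(\Omega)}$, in which case $m\in\conv{\psi(\Omega)}$ and we are done, or $m$ is a relative-boundary point, in which case a supporting hyperplane $H$ at $m$ together with the barycenter identity forces the measure to concentrate on $H$ (its mass sits in a closed half-space bounded by $H$ while its mean $m$ lies on $H$), dropping the dimension by one and enabling the induction. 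Finiteness of $\mu(\Omega)$ and of the moments in $S$ guarantees that all the means used are well defined, which closes the argument.
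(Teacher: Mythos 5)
The paper does not actually prove this lemma: it is quoted from Rogosinski, with a pointer to Shapiro's Lemma 3.1 for a proof, so there is no internal argument to compare against. Your proposal is essentially that standard proof. The reduction of moment matching to representing the barycenter $m=\int_\Omega \psi\,d\mu$ as a convex combination of points of $\psi(\Omega)$ is correct, and your induction on the dimension of the affine hull is exactly the right way to get membership in the genuine convex hull rather than its closure: either $m$ lies in the relative interior of $\conv{\psi(\Omega)}$ (note that a convex set and its closure have the same relative interior, so this already gives $m \in \conv{\psi(\Omega)}$), or there is a supporting hyperplane $H$ at $m$, and then the nonnegative integrand $\langle a,m\rangle - \langle a,\psi(x)\rangle$ has zero integral, so the mass concentrates on $\psi^{-1}(H)\cap\Omega$, which has full measure and unchanged barycenter, and the dimension drops. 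Combined with Carath\'eodory this yields an atomic probability measure with at most $|S|+1$ atoms, and your observation that the weights sum to one automatically when the constant function lies in the span of $\{x^\alpha : \alpha \in S\}$ is also correct.

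The genuine gap is the step from $|S|+1$ to $|S|$, and you have put your finger on it yourself: your argument delivers $|S|$ atoms only under the unstated hypothesis that the constant is in the span of the monomials indexed by $S$ (e.g., $0\in S$), or more generally when the affine hull of $\psi(\Omega)$ fails to be full-dimensional. This cannot be repaired in the stated generality, because the bound $|S|$ is actually false for arbitrary measurable $\Omega$: take $n=1$, $S=\{2\}$, and $\Omega=[0,\epsilon]\cup[1,1+\epsilon]$ with normalized Lebesgue measure; then $\int_\Omega x^2\,d\mu \approx \nf12$ for small $\epsilon$, but no single point $x_0\in\Omega$ has $x_0^2\approx \nf12$ (one would need $x_0\approx 0.707$), so any representing probability measure requires $|S|+1=2$ atoms. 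The standard hypothesis restoring the bound $|S|$ is connectedness of $\psi(\Omega)$ (e.g., $\Omega$ connected): by the Fenchel--Bunt refinement of Carath\'eodory's theorem, every point of the convex hull of a connected subset of $\mathbb{R}^{|S|}$ is a convex combination of at most $|S|$ of its points. This is harmless for the paper: in the proof of part (ii) of Theorem \ref{th:approx.poly.norm.sphere} the lemma is applied with $\Omega=B^\circ$, which is convex, so the $|S|$-atom version does hold there; and in any case that application only needs finitely many atoms with nonnegative weights, so even the $|S|+1$ bound would suffice, with $p={d+n-1 \choose d}+1$ instead of ${d+n-1 \choose d}$.
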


We may now prove part 2 of Theorem \ref{th:approx.poly.norm.sphere}.

\begin{proof}[Proof of (ii) of Theorem \ref{th:approx.poly.norm.sphere}]
Let $\mu$ be the normalized Lebesgue measure on $B^\circ$, i.e.\
\[
d\mu(y) = \frac{1}{\mbox{vol} B^\circ} dy.
\]
We now use the multinomial theorem
\[
\langle x,y\rangle^{d} = \sum_{|\alpha| = d} {d \choose \alpha} x^\alpha y^\alpha,
\]
with the multinomial notation
\[
|\alpha| = \sum_i \alpha _i, \; {d \choose \alpha} = \frac{d!}{\alpha_1!\cdots\alpha_n!},
\]
to  rewrite $f_d$ in \eqref{def:f_d} as
\begin{equation}
\label{pol_f}
f_d(x) = \sum_{|\alpha| = d} {d \choose \alpha} m_\mu(\alpha)x^\alpha,
\end{equation}
where $m_\mu(\alpha)$ is the moment of order $\alpha$ of $\mu$, as defined in \eqref{def:moment alpha}.

By Lemma \ref{lemma:Shapiro}, there exist $\bar y^{(1)},\ldots, \bar y^{(p)} \in B^\circ$ with
$p = {d+n-1 \choose d}$ so that
\[
m_\mu(\alpha) = \sum_{j=1}^p \lambda_j (\bar y^{(j)})^\alpha,
\]
for some $\lambda_j \ge 0$ $(j= 1,\ldots,p)$ with $\sum_{j=1}^p \lambda_j =1$. Substituting in \eqref{pol_f}, one has
\begin{eqnarray*}
f_d(x) &=& \sum_{|\alpha| = d} {d \choose \alpha} \sum_{j=1}^p \lambda_j (\bar y^{(j)})^\alpha x^\alpha \\
  &=& \sum_{j=1}^p \lambda_j \left( \sum_{|\alpha| = d} {d \choose \alpha} (\bar y^{(j)})^\alpha x^\alpha \right) \\
  &=& \sum_{j=1}^p \lambda_j \langle \bar y^{(j)}, x\rangle^{d},
\end{eqnarray*}
as required.
\end{proof}
}


\section{Semidefinite programming-based approximations of polynomial norms} \label{sec:sos.approx}

\subsection{Complexity}
It is natural to ask whether testing if the $d^{th}$ root of a given degree-$d$ form is a norm can be done in polynomial time. In the next theorem, we show that, unless $P=NP$, this is not the case even when $d=4$.
\begin{theorem}\label{th:NP.hard}
	Deciding whether the $4^{th}$ root of a quartic form is a norm is strongly NP-hard.
\end{theorem}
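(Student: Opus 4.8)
The plan is to reduce from a known strongly NP-hard problem about quartic forms. The natural candidate is deciding nonnegativity (positive semidefiniteness) of a quartic form, or more specifically the fact that deciding whether a biquadratic/quartic form is positive definite is hard; an even cleaner starting point is the strong NP-hardness of testing whether a quartic form is \emph{convex}, established by Ahmadi, Olshevsky, Parrilo, and Tsitsiklis. By Theorem~\ref{th:norm.str.conv}, the $4^{\text{th}}$ root of a quartic form $f$ is a norm precisely when $f$ is convex and positive definite, so the target decision problem bundles two conditions. The strategy is to engineer the reduction so that one of these two conditions is automatically satisfied, leaving the hardness to come entirely from the other.

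First I would take an instance of the hard source problem, say a quartic form $q$ in variables $x=(x_1,\dots,x_n)$ whose convexity (or positive definiteness) is NP-hard to decide. The key step is to build from $q$ a new quartic form $g$, possibly in a few extra variables, with the following two properties: (a) $g$ is always positive definite (respectively always convex), regardless of the answer to the source instance, so that one of the two norm-conditions holds trivially; and (b) $g$ is convex (respectively positive definite) if and only if the source instance is a ``yes'' instance. The usual device is to add a term in a fresh variable, e.g. pass to something of the form $g(x,x_{n+1}) = q(x) + c\,x_{n+1}^4 + (\text{coupling})$, or to form a convex combination with a fixed strictly convex form such as $(\sum_i x_i^2)^2$ with a carefully chosen coefficient, so that one property is forced while the other property tracks the feature of $q$ that is hard to decide. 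Since $q$ has integer (or rational) coefficients of polynomial bit-size and the transformation is explicit and of polynomial size, the reduction runs in polynomial time and the scaling constants can be kept of polynomial bit-length, which is what ``strong'' NP-hardness requires.

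The main obstacle I anticipate is the coupling in property (b): making positive definiteness (or convexity) of the augmented form $g$ equivalent to the hard property of $q$ \emph{without} inadvertently introducing spurious directions that violate the forced property, and while keeping the magnitudes of the introduced constants polynomially bounded. Concretely, when I add a strictly convex form to repair positive definiteness or convexity, I must verify that the added form does not perturb the other condition in a way that destroys the ``iff,'' and that the threshold constant separating the yes/no cases is itself computable in polynomial time and does not require exponentially large or exponentially precise numbers. I would close the argument by checking both implications of the equivalence explicitly: that a ``yes'' instance yields a convex positive definite $g$ (hence $g^{1/4}$ is a norm by Theorem~\ref{th:norm.str.conv}), and that a ``no'' instance produces a violation of exactly one of the two conditions, so that $g^{1/4}$ fails to be a norm. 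This, together with the polynomial-time, polynomially-bounded nature of the construction, establishes strong NP-hardness.
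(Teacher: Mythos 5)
Your high-level framing is the right one --- reduce from the quartic-convexity hardness result of \cite{NPhard_Convexity_MathProg}, invoke Theorem~\ref{th:norm.str.conv} to translate ``$f^{1/4}$ is a norm'' into ``$f$ convex and positive definite,'' and arrange for one of the two conditions to hold automatically while the other carries the hardness. But the step you defer as ``the main obstacle'' is in fact the entire proof, and the devices you sketch for it do not work. If you treat quartic convexity as a black box and set $g = q + c\left(\sum_i x_i^2\right)^2$, positive definiteness of $g$ requires $c$ large enough to dominate the negative part of $q$, and any such additive strictly convex term can convexify a non-convex no-instance, destroying the biconditional; there is no known polynomial-time computable threshold $c$ that separates the cases, and computing the minimal convexifying constant is essentially as hard as the problem you started from. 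The fresh-variable variant $g(x,x_{n+1}) = q(x) + c\,x_{n+1}^4$ has the opposite defect: the Hessian is block diagonal so the convexity iff survives, but $g(x,0)=q(x)$, so positive definiteness fails whenever $q$ does --- which is exactly the case you cannot detect. Your parenthetical ``respectively'' branch (force convexity, track positive definiteness) is also unsupported: no hardness result is known for positive definiteness \emph{restricted to convex} quartic forms, and a convex form is automatically nonnegative, so that branch would need a new hardness theorem, not a reduction gadget.

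The paper escapes this trap by \emph{not} using quartic convexity as a black box: it reduces directly from CLIQUE, reusing the internal structure of the construction in \cite{NPhard_Convexity_MathProg}. For a graph $G$ and integer $k$ one forms the biquadratic $b(x;y)=-2k\sum_{i,j\in E}x_ix_jy_iy_j-(1-k)\left(\sum_i x_i^2\right)\left(\sum_i y_i^2\right)$, and the prior results (building on \cite{Ling_et_al_Biquadratic}) show that the \emph{same} graph quantity controls both conditions at once: $\omega(G)\le k$ holds if and only if $b$ is positive semidefinite, if and only if the specific augmented quartic $b(x;y)+\frac{n^2\gamma}{2}\left(\sum_i x_i^4+\sum_i y_i^4+\sum_{i<j}(x_i^2x_j^2+y_i^2y_j^2)\right)$ is convex, where $\gamma$ is read off from the coefficients of $b$. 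In the yes case the augmented form is convex \emph{and} positive definite (a PSD form plus a positive definite one), while in the no case convexity itself fails, so the norm property fails without any need to control positive definiteness there. This simultaneous control of both conditions by one combinatorial quantity is precisely what a generic additive repair cannot deliver, and it is the idea missing from your proposal; the strong NP-hardness then comes for free because CLIQUE involves no large numbers.
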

\begin{proof}
	The proof of this result is adapted from a proof in \cite{NPhard_Convexity_MathProg}. Recall that the CLIQUE problem can be described thus: given a graph $G=(V,E)$ and a positive integer $k$, decide whether $G$ contains a clique of size at least $k$. The CLIQUE problem is known to be NP-hard \cite{GareyJohnson_Book}. We will give a reduction from CLIQUE to the problem of testing convexity and positive definiteness of a quartic form. The result then follows from Theorem \ref{th:norm.str.conv}. Let $\omega(G)$ be the clique number of the graph at hand, i.e., the number of vertices in a maximum clique of $G$. Consider the following quartic form $$b(x;y)\mathrel{\mathop{:}}=-2k\sum_{i,j \in E} x_ix_jy_iy_j-(1-k)\left(\sum_i x_i^2\right)\left(\sum_i y_i^2\right).$$ In \cite{NPhard_Convexity_MathProg}, using in part a result in \cite{Ling_et_al_Biquadratic}, it is shown that
	\begin{align} \label{eq:clique.str.conv}
	&\omega(G)\leq k \Leftrightarrow b(x;y)+\frac{n^2 \gamma}{2} \left( \sum_{i=1}^n x_i^4+\sum_{i=1}^n y_i^4 +\sum_{1\leq i<j \leq n} (x_i^2x_j^2+ y_i^2y_j^2)\right)
	\end{align}
	is convex and $b(x;y)$ is positive semidefinite. Here, $\gamma$ is a positive constant defined as the largest coefficient in absolute value of any monomial present in some entry of the matrix $\left[\frac{\partial^2 b(x;y)}{\partial x_i \partial y_j}\right]_{i,j}$. As $\sum_i x_i^4+\sum_i y_i^4$ is positive definite and as we are adding this term to a positive semidefinite expression, the resulting polynomial is positive definite. Hence, the equivalence holds if and only if the quartic on the righthandside of the equivalence in (\ref{eq:clique.str.conv}) is convex and positive definite.
\end{proof}

Note that this also shows that strict convexity is hard to test for quartic forms (this is a consequence of Theorem \ref{th:norm.str.conv}). A related result is Proposition 3.5. in \cite{NPhard_Convexity_MathProg}, which shows that testing strict convexity of a polynomial of even degree $d\geq 4$ is hard. However, this result is not shown there for \emph{forms}, hence the relevance of the previous theorem.

 {\ghh Theorem \ref{th:NP.hard} rules out the possibility of a pseudo-polynomial time characterization of polynomial norms (unless P=NP) and motivates the study of tractable sufficient conditions.
The sufficient conditions we consider next are based on semidefinite programming. Semidefinite programs can be solved to arbitrary accuracy in polynomial time~\cite{VaB:96} and technology for solving this class of problems is rapidly improving~\cite{mosek,nie_large_scale,qsdpnal,sun_large_scale,Symmetry_groups_Gatermann_Pablo,riener_symmetry,iSOS_journal}.}

\subsection{Sum of squares polynomials and semidefinite programming review} \label{sec:sos.review}

We start this section by reviewing the notion of \emph{sum of squares polynomials} and related concepts such as \emph{sum of squares-convexity}. We say that a polynomial $f$ is a \emph{sum of squares} (sos) if $f(x)=\sum_i q_i^2(x)$, for some polynomials $q_i$. Being a sum of squares is a sufficient condition for being nonnegative. The converse however is not true, as is exemplified by the Motzkin polynomial
\begin{align}\label{eq:motzkin}
M(x,y)=x^4y^2 + x^2y^4 − 3x^2y^2 + 1
\end{align}
which is nonnegative but not a sum of squares \cite{MotzkinSOS}.
The sum of squares condition is a popular surrogate for nonnegativity due to its tractability. Indeed, while testing nonnegativity of a polynomial of degree greater or equal to 4 is a hard problem, testing whether a polynomial is a sum of squares can be done using \emph{semidefinite programming.} This comes from the fact that a polynomial $p$ of degree $d$ is a sum of squares if and only if there exists a positive semidefinite matrix $Q$ such that $f(x)=z(x)^TQz(x)$, where $z(x)$ is the standard vector of monomials of degree up to $d$ (see, e.g., \cite{PabloPhD}). As a consequence, any optimization problem over the coefficients of a set of polynomials which includes a combination of affine constraints and sos constraints on these polynomials, together with a linear objective can be recast as a semidefinite program. These type of optimization problems are known as \emph{sos programs} and have found widespread applications in recent years \cite{sdprelax,lasserre_moment,henrion2004solving,henrion2005}.

Though not all nonnegative polynomials can be written as sums of squares, the following theorem by Artin \cite{Artin_Hilbert17} circumvents this problem using sos multipliers.
\begin{theorem}[Artin \cite{Artin_Hilbert17}]\label{th:artin}
	For any nonnegative polynomial $f$, there exists an sos polynomial $q$ such that $q \cdot f$ is sos.
\end{theorem}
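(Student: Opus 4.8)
The plan is to prove the contrapositive in its sharpest form: I will show that if $f \in \mathbb{R}[x_1,\dots,x_n]$ is not a sum of squares of \emph{rational} functions, then $f$ takes a strictly negative value at some real point, contradicting nonnegativity. Once $f$ is known to be a sum of squares of rational functions, the stated conclusion is immediate: writing $f = \sum_i (g_i/h)^2$ over a common denominator $h$, the polynomial $q := h^2$ is a single square (hence sos), and $q \cdot f = \sum_i g_i^2$ is sos. So the entire content lies in the rational Hilbert--17 representation, and the passage to the polynomial multiplier $q$ is routine.

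The algebraic core is the Artin--Schreier theory of formally real fields applied to the function field $K = \mathbb{R}(x_1,\dots,x_n)$, which is formally real since sums of squares of rational functions cannot equal $-1$. First I would invoke the Artin--Schreier characterization of sums of squares: in a formally real field, an element is a sum of squares if and only if it is nonnegative under every ordering of the field. Consequently, if $f$ is not a sum of squares in $K$, there exists an ordering $<$ of $K$ in which $f < 0$. Passing to the real closure $R$ of the ordered field $(K,<)$, I obtain a real closed field containing a copy of $\mathbb{R}$ (through the constants) together with elements $\xi_1,\dots,\xi_n \in R$—the images of the variables—for which $f(\xi_1,\dots,\xi_n) < 0$.

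The second ingredient is a transfer step that converts this abstract witness into a genuine real point. The assertion ``there exist $\xi_1,\dots,\xi_n$ with $f(\xi_1,\dots,\xi_n) < 0$'' is a first-order sentence in the language of ordered fields. Since the theory of real closed fields is complete (Tarski's transfer principle), and both $\mathbb{R}$ and $R$ are real closed with $\mathbb{R} \subseteq R$, this sentence holds in $\mathbb{R}$ as well. Hence there is an actual point $a \in \mathbb{R}^n$ with $f(a) < 0$, the desired contradiction. Historically, Artin carried out this descent by specialization of places rather than model theory, but the conclusion is identical.

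The main obstacle is precisely this interplay between the abstract and the concrete. Building an ordering of $K$ in which $f$ is negative rests on the nontrivial Artin--Schreier result that totally positive elements are exactly the sums of squares, and then descending from the real closure $R$ back to $\mathbb{R}$ requires the completeness (equivalently, model-completeness) of the theory of real closed fields. Neither step is elementary, and together they constitute the heart of the solution to Hilbert's 17th problem; everything before and after—orderability of $K$ and the final clearing of denominators to extract $q$—is straightforward by comparison.
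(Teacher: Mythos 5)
The paper does not actually prove Theorem \ref{th:artin}: it is quoted as a known classical result with a citation to Artin, so there is no in-paper argument to compare yours against. Your sketch is a correct rendition of the standard proof from the literature. The reduction at the start is right: once $f = \sum_i (g_i/h)^2$ in $\mathbb{R}(x_1,\ldots,x_n)$, taking $q = h^2$ (a nonzero single square, hence sos) gives $q \cdot f = \sum_i g_i^2$, so the theorem is precisely Artin's rational-function solution of Hilbert's 17th problem. The contrapositive core is also correctly assembled: $K = \mathbb{R}(x_1,\ldots,x_n)$ is formally real; by the Artin--Schreier theorem an element of a formally real field that is not a sum of squares is negative in some ordering; in the real closure $R$ of $(K,<)$ the variables themselves witness the sentence $\exists \xi \; f(\xi)<0$; and since $\mathbb{R}$ carries a unique ordering (positive reals are squares), $\mathbb{R}\subseteq R$ is an inclusion of real closed fields compatible with the orderings, so the sentence descends to $\mathbb{R}$, contradicting nonnegativity.

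One point of precision rather than a gap: because the coefficients of $f$ are arbitrary real numbers, the descent step requires \emph{model completeness} of the theory of real closed fields --- equivalently, that every inclusion of real closed fields is an elementary extension --- not completeness of the theory by itself, since completeness only transfers parameter-free sentences (or sentences whose parameters are real algebraic). You do name model-completeness parenthetically at the end, but the phrase ``completeness (equivalently, model-completeness)'' conflates two distinct properties that happen to both hold for this theory; the parameters in $\exists \xi\; f(\xi)<0$ are what force the stronger one. Also, strictly speaking the Artin--Schreier characterization needs $f \neq 0$ in $K$ (the case $f \equiv 0$ is trivial, with $q=1$), and the existence of an ordering making a non-sum-of-squares negative uses Zorn's lemma to extend a preordering excluding $-f$; these are standard and implicit in the black boxes you invoke. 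Modulo these remarks, the proposal is a faithful and complete outline of the classical argument the paper delegates to \cite{Artin_Hilbert17}.
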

This theorem in particular implies that if we are given a polynomial $f$, then we can always check its nonnegativity using an sos program that searches for $q$ (of a fixed degree). However, this result does not allow us to optimize over the set of nonnegative polynomials or positive semidefinite polynomial matrices using an sos program (as far as we know). This is because, in that setting, products of decision varibles arise from multiplying polynomials $f$ and $q$, whose coefficients are decision variables.

By adding further assumptions on $f$, Reznick showed in \cite{Reznick_Unif_denominator} that
one could further pick $q$ to be a power of $\sum_i x_i^2$ {\rev (we refer the reader to \cite[Chapter 1, Section 3]{barvinok2002course} for another nice presentation of Reznick's result)}. In what follows, $S^{n-1}$ denotes the unit sphere in $\mathbb{R}^n.$

\begin{theorem}[Reznick \cite{Reznick_Unif_denominator}]\label{th:reznick}
	Let $f$ be a positive definite form of degree $d$ in $n$ variables and define $$\epsilon(f)\mathrel{\mathop{:}}=\frac{\min\{f(u)~|~ u \in S^{n-1} \}}{\max\{f(u)~|~ u \in S^{n-1}\}}.$$ If $r\geq  \frac{nd(d-1)}{4 \log(2)\epsilon(f)}-\frac{n+d}{2}$, then $(\sum_{i=1}^n x_i^2)^r \cdot f$ is a sum of squares.
\end{theorem}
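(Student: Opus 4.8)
The plan is to prove the stronger statement that $(\sum_i x_i^2)^r f$ is not merely a sum of squares but a \emph{nonnegative integral of $2k$-th powers of linear forms}, where $2k := d+2r$; since every such form is sos, this suffices. The point of departure is the averaging operator on the unit sphere $S^{n-1}$,
\[
(\mathcal{Q}g)(x) = \int_{S^{n-1}} g(u)\,\langle u,x\rangle^{2k}\,d\sigma(u),
\]
which sends a density $g$ on the sphere to a form of degree $2k$; when $g \ge 0$, the output is manifestly an integral of $2k$-th powers of linear forms. Writing $\lambda = \min_{S^{n-1}} f$ and $\Lambda = \max_{S^{n-1}} f$, so that $\epsilon(f) = \lambda/\Lambda$, I would reduce the theorem to producing a \emph{nonnegative} density $g$ with $\mathcal{Q}g = (\sum_i x_i^2)^r f$ as forms. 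Since a degree-$2k$ form is determined by its restriction to $S^{n-1}$, and $(\sum_i x_i^2)^r f$ restricts to $f|_{S^{n-1}}$, this is equivalent to solving $\mathcal{Q}g = f$ on the sphere.

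The operator $\mathcal{Q}$ is a spherical convolution with the zonal kernel $t \mapsto t^{2k}$, so by the Funk--Hecke theorem it is diagonalized by spherical harmonics: each harmonic of degree $\ell$ is an eigenfunction with eigenvalue $\mu_\ell$ given by an explicit Gegenbauer integral, and $\mu_\ell > 0$ for every even $\ell$ with $0 \le \ell \le 2k$. Crucially, the restriction of the degree-$d$ form $f$ to $S^{n-1}$ has a spherical-harmonic expansion $f = \sum_\ell f_\ell$ supported only on the finitely many even degrees $\ell \in \{0,2,\dots,d\}$. Because $d \le 2k$, all the relevant $\mu_\ell$ are positive, so I can set $g = \sum_{\ell = 0,2,\dots,d} \mu_\ell^{-1} f_\ell$, which by construction satisfies $\mathcal{Q}g = f$ on the sphere. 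Everything now rests on showing that this particular $g$ is nonnegative once $r$ is large enough.

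To control the sign of $g$ I would compare it to $\mu_0^{-1} f$. Writing $g = \mu_0^{-1} f + \sum_{\ell = 2,\dots,d} (\mu_\ell^{-1} - \mu_0^{-1}) f_\ell$ and using $f \ge \lambda$ on the sphere, it suffices that the correction term have sup-norm at most $\mu_0^{-1}\lambda$. Set $\delta_k = \max_{2 \le \ell \le d} |1 - \mu_0/\mu_\ell|$; then the correction is bounded in sup-norm by $\mu_0^{-1}\delta_k \sum_{\ell \ge 2}\|f_\ell\|_\infty$. Since the harmonic projections act on the fixed finite-dimensional space of degree-$d$ forms, there is a constant $C(n,d)$ with $\sum_\ell \|f_\ell\|_\infty \le C(n,d)\,\Lambda$, so it is enough to guarantee $\delta_k\,C(n,d)\,\Lambda \le \lambda$, i.e.\ $\delta_k \le \epsilon(f)/C(n,d)$. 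As $k \to \infty$ the kernel $t^{2k}$ concentrates at $t = \pm 1$, forcing $\mathcal{Q}$ to approach a multiple of the identity on even functions and hence $\mu_\ell/\mu_0 \to 1$; thus $\delta_k \to 0$ and some finite $r$ works.

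The main obstacle is entirely quantitative: turning the qualitative $\delta_k \to 0$ into the explicit threshold $r \ge \frac{nd(d-1)}{4\log(2)\,\epsilon(f)} - \frac{n+d}{2}$. This requires exact evaluation of the eigenvalue ratios $\mu_\ell/\mu_0$ --- which are products of ratios of the form $\frac{2k-2i+2}{2k+n-2+2i}$ coming from the underlying Gegenbauer/Beta integrals --- together with a sharp bound on the discrepancy $1 - \mu_\ell/\mu_0$ for $2 \le \ell \le d$. Estimating the product through its logarithm (which is where the $\log 2$ factor should enter) and folding in an explicit value of the norm-equivalence constant $C(n,d)$ for the harmonic projections is the delicate part; the linear-in-$1/k$ behaviour of $1 - \mu_\ell/\mu_0$ together with $\ell \le d$ is what produces the $d(d-1)/\epsilon(f)$ scaling and the additive shift $-(n+d)/2$. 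I expect the bookkeeping in these estimates, rather than any conceptual difficulty, to be the crux of the proof.
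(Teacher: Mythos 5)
A point of orientation first: the paper does not actually prove this theorem; it quotes it from Reznick, and the pieces of Reznick's argument that the paper needs are restated as Propositions~\ref{prop:sq2lin}--\ref{prop:pd.Phi} and recombined in the proof of Theorem~\ref{th:r.sos.convex}. In that framework the statement follows by setting $s=r+d$ (so that $s\ge \frac{nd(d-1)}{4\log 2\,\epsilon(f)}-\frac{n-d}{2}$), invoking Proposition~\ref{prop:pd.Phi} to get that $g:=\Phi_s^{-1}(f)$ is positive semidefinite, and then using Propositions~\ref{prop:def.Phi}, \ref{prop:sq2lin} and~\ref{prop:diff2sum} to write $f\cdot(\sum_i x_i^2)^r=g(D)(\sum_i x_i^2)^s=(2s)_d\sum_k\lambda_k\,g(\alpha_k)(\alpha_k^Tx)^{2s-d}$, a nonnegative combination of even powers of linear forms, hence sos. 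Your proposal is the integral-operator avatar of exactly this scheme: your $\mathcal{Q}$ is diagonal on the same harmonic decomposition as $\Phi_s$, your density $g$ plays the role of $\Phi_s^{-1}(f)$, and your ``nonnegative integral of $2k$-th powers'' is the continuous version of the representation above. All the soft steps you carry out (Funk--Hecke diagonalization, positivity of $\mu_\ell$ for even $\ell\le 2k$, solving $\mathcal{Q}g=f$ degree by degree, homogeneity to pass from the sphere back to forms, $\delta_k\to 0$) are correct.

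The genuine gap is that the quantitative heart of the theorem --- the counterpart of Proposition~\ref{prop:pd.Phi}, namely nonnegativity of $g$ under the \emph{stated} threshold --- is not proved, and the mechanism you propose for it would not deliver the stated constant. Your criterion is $\delta_k\,C(n,d)\le\epsilon(f)$, where $C(n,d)$ must bound $\sum_\ell\|f_\ell\|_\infty/\|f\|_\infty$ over positive semidefinite forms. You leave $C(n,d)$ unquantified, and this is not ``bookkeeping'': the eigenvalue products give $\delta_k\approx d(n+d-2)/(4k)$, so the bare gap requirement is $k\gtrsim d(n+d-2)/(4\epsilon(f))$, and the stated threshold $\frac{nd(d-1)}{4\log 2\,\epsilon(f)}-\frac{n+d}{2}$ exceeds it only by a factor of order $n(d-1)/(n+d)$, i.e., at most about $\min(n,d)$. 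That is all the room your $C(n,d)$ is allowed to occupy, whereas the bounds one can actually prove per harmonic projection (the reproducing-kernel bound $\|f_\ell\|_\infty\le\dim(\mathcal{H}_\ell)\,\|f\|_\infty$, summing to $\dim H_{n,d}=\binom{n+d-1}{d}$, or Lebesgue-constant refinements still growing like $\ell^{(n-2)/2}$ per component) are far larger, and nothing in your sketch exploits positive semidefiniteness of $f$ to do better. Reznick's proof avoids sup-norm bounds on individual harmonic components altogether: it controls the distortion of the inverse operator on $f$ as a whole, using the explicit eigenvalue products (the ratios $\frac{2k-2i+2}{2k+n-2+2i}$ you correctly identify) together with elementary estimates of the type $1-x\ge 4^{-x}$ on $[0,\frac12]$, which is where the $\log 2$ enters. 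So your architecture is the right one, but the comparison-to-$\mu_0^{-1}f$ device would have to be replaced, not merely made explicit, to recover the theorem as stated; as written, your argument yields only a qualitative version with a substantially worse $r$.
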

Motivated by this theorem, the notion of $r$-sos polynomials can be defined: a polynomial $f$ is said to be $r$-sos if $(\sum_i x_i^2)^r \cdot f$ is sos. Note that it is clear that any $r$-sos polynomial is nonnegative and that the set of $r$-sos polynomials is included in the set of $(r+1)$-sos polynomials. The Motzkin polynomial in (\ref{eq:motzkin}) for example is $1$-sos although not sos.

To end our review, we briefly touch upon the concept of sum of squares-convexity (sos-convexity), which we will build upon in the rest of the section. Let $H_f$ denote the Hessian matrix of a polynomial $f$. We say that $f$ is \emph{sos-convex} if $y^TH_f(x)y$ is a sum of squares (as a polynomial in $x$ and $y$). As before, optimizing over the set of sos-convex polynomials can be cast as a semidefinite program. Sum of squares-convexity is obviously a sufficient condition for convexity via the second-order characterization of convexity. However, there are convex polynomials which are not sos-convex (see, e.g., \cite{AAA_PP_not_sos_convex_journal}). For a more detailed overview of sos-convexity including equivalent characterizations and settings in which sos-convexity and convexity are equivalent, refer to \cite{AAA_PP_table_sos-convexity}.

\subsubsection{Notation}

Throughout, we will use the notation $H_{n,d}$ (resp. $P_{n,d}$) to denote the set of forms (resp. positive semidefinite, aka nonnegative, forms) in $n$ variables and of degree $d$. We will futhermore use the falling factorial notation $(t)_0=1$ and $(t)_k=t(t-1)\ldots(t-(k-1))$ for a positive integer $k$.
{\ghh

\subsection{Certifying validity of a polynomial norm} \label{sec:test}}

In this subsection, we assume that we are given a form $f$ of degree $d$ and we would like to {\ghh prove that} $f^{1/d}$ is a norm using semidefinite programming.

\begin{theorem} \label{th:test.poly.norm}
	Let $f$ be a degree-$d$ form. Then $f^{1/d}$ is a polynomial norm if and only if there exist $c>0$, $r \in \mathbb{N}$, {\ghh and an sos form $q(x)$ such that $q(x) \cdot y^T H_f(x) y$ is sos} and $\left(f(x)-c(\sum_i x_i^2)^{d/2}\right) (\sum_i x_i^2)^r$ is sos. Furthermore, this condition can be checked using semidefinite programming.
\end{theorem}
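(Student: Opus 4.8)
The plan is to first invoke Theorem~\ref{th:norm.str.conv}, which reduces the claim to the assertion that $f$ is convex and positive definite if and only if the two sum of squares conditions hold, and then to observe that, once the degree parameters are fixed, each condition becomes a semidefinite feasibility problem. So I would prove the two directions of this algebraic equivalence and finish with the tractability statement.

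For the direction that the certificates imply a norm, suppose we are given a nonzero sos form $q$, a constant $c>0$, and an integer $r$ with both products sos. Since $q$ is a nonzero sos form, the set $\{x : q(x)>0\}$ is the complement of the proper real variety $\{q=0\}$ and is therefore dense; on it one has $y^T H_f(x)y = (\text{sos})(x,y)/q(x)\ge 0$, and by continuity of $x\mapsto y^T H_f(x)y$ this extends to all $x$, so $H_f(x)\succeq 0$ everywhere and $f$ is convex. Likewise, $\big(f(x)-c(\sum_i x_i^2)^{d/2}\big)(\sum_i x_i^2)^r$ being sos forces, for $x\ne 0$, that $f(x)\ge c\|x\|_2^d>0$, so $f$ is positive definite. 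Theorem~\ref{th:norm.str.conv} then gives that $f^{1/d}$ is a norm.

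For the converse, assume $f$ is convex and positive definite. The positive-definiteness certificate comes directly from Reznick's theorem: since $f$ is positive definite, $m:=\min_{\|u\|=1}f(u)>0$, so for any $c\in(0,m)$ the form $g:=f-c(\sum_i x_i^2)^{d/2}$ is positive definite of degree $d$, and Theorem~\ref{th:reznick} yields an $r$ with $(\sum_i x_i^2)^r g$ sos, which is exactly the second condition. The convexity certificate is where the real work lies: convexity only gives $H_f(x)\succeq 0$ for all $x$, and this Hessian can be singular at nonzero points (e.g. $f=x_1^4+x_2^4$, whose Hessian is singular at $(1,0)$), so the biform $y^T H_f(x)y$ is in general only positive semidefinite. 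Hence the positive-definite-biform extension of Reznick's theorem (which would supply the power multiplier $q=(\sum_i x_i^2)^r$, i.e. $r$-sos-convexity, cf.\ Theorem~\ref{th:r.sos.convex}) need not apply. I would instead invoke a matrix version of Artin's theorem generalizing Theorem~\ref{th:artin}: a symmetric matrix of forms that is positive semidefinite at every real point is, over the field of rational functions, a sum of squares of rational matrices, and clearing denominators produces an sos form $q(x)$ such that $q(x)H_f(x)$ is an sos matrix, equivalently $q(x)\,y^T H_f(x)y$ is sos. This is the step I expect to be the main obstacle: applying the scalar Artin theorem (Theorem~\ref{th:artin}) to the nonnegative polynomial $y^T H_f(x)y$ only yields a multiplier in the joint variables $(x,y)$, and the crux is to guarantee the multiplier can be chosen to depend on $x$ alone, which is precisely what the matrix/biform version buys and what must be argued carefully in the degenerate singular-Hessian case.

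Finally, for the semidefinite programming claim, fix $r$ and a degree bound $2s$ on $q$. Writing $q=z_s(x)^T Q\,z_s(x)$ with $Q\succeq 0$, where $z_s$ is the vector of degree-$s$ monomials, turns ``$q$ sos'' and ``$q\cdot y^T H_f(x)y$ sos'' into linear matrix constraints in $Q$ and an auxiliary Gram matrix, since the coefficients of $q\cdot y^T H_f(x)y$ are linear in the entries of $Q$; similarly $c$ enters $\big(f(x)-c(\sum_i x_i^2)^{d/2}\big)(\sum_i x_i^2)^r$ linearly, so ``is sos'' is again a Gram-matrix constraint. Thus for each fixed $(r,s)$ one may maximize $c$ over these constraints by a single semidefinite program, and $f^{1/d}$ is a polynomial norm if and only if this optimal value is positive for some $(r,s)$; the two directions above show that such $(r,s)$ exist exactly when $f^{1/d}$ is a norm, which gives the desired semidefinite programming hierarchy.
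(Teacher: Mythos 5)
Your proof is correct and follows essentially the same route as the paper's: the reduction via Theorem~\ref{th:norm.str.conv}, Reznick's Theorem~\ref{th:reznick} applied to $f-c(\sum_i x_i^2)^{d/2}$ for the positive-definiteness certificate, and the ``matrix version of Artin's theorem'' you invoke for an $x$-only multiplier is exactly the paper's Proposition~\ref{prop:mat.art} (Procesi--Schacher, Gondard--Ribenboim, Hillar--Nie), used in the same way. Your extra care in the forward direction (requiring $q\not\equiv 0$ and arguing by density of $\{q>0\}$, a point the paper glosses over with ``immediate'') and your maximize-$c$ variant of the paper's semidefinite feasibility program \eqref{eq:SDP.test.poly.norm} are only minor refinements, not a different approach.
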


{\ghh
	To show this result, we require a counterpart to Theorem \ref{th:artin} for matrices, which we present below. {\rev We say that a polynomial matrix $H(x)$, i.e., a matrix whose entries are polynomials in $x=(x_1,\ldots,x_n)$, is positive semidefinite if it has nonnegative eigenvalues for all substitutions $x \in \mathbb{R}^n$. } 
	\begin{prop}[\cite{procesischaher, gondard, hillarnie}]\label{prop:mat.art}
	If $H(x)$ is a positive semidefinite polynomial matrix, then there exists a sum of squares polynomial $q(x)$ such that $q(x)\cdot y^TH(x)y$ is a sum of squares.
	\end{prop}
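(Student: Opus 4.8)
The plan is to reduce the claim to the scalar version already available to us, namely Artin's theorem (Theorem \ref{th:artin}), by diagonalizing $H$ over the field of rational functions $K := \mathbb{R}(x_1,\ldots,x_n)$. The naive attempt --- applying Theorem \ref{th:artin} directly to the nonnegative polynomial $p(x,y) := y^T H(x) y$ --- does not suffice, because it produces only a multiplier in the joint variables $(x,y)$, whereas the conclusion insists on a multiplier $q(x)$ depending on $x$ alone. Eliminating the $y$-dependence of the multiplier is exactly the content of the proposition and is where the real work lies.

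First I would diagonalize. Regarded as a symmetric matrix over $K$ (a field of characteristic $0$), $H$ can be diagonalized by congruence: there is an invertible $P \in GL_m(K)$ and a diagonal matrix $D = \mathrm{diag}(d_1,\ldots,d_m)$ with $d_i \in K$ such that $P^T H P = D$; equivalently $H = R^T D R$ with $R := P^{-1}$. I claim each $d_i$ is a nonnegative rational function. Indeed, for every real point $x_0$ outside the proper algebraic subvariety where some denominator of $P$, $R$, or $D$ vanishes, the identity $D(x_0) = P(x_0)^T H(x_0) P(x_0)$ holds with $P(x_0)$ real and invertible; since $H(x_0) \succeq 0$, congruence gives $D(x_0) \succeq 0$, so $d_i(x_0) \ge 0$. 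As this holds on a dense set and each $d_i$ is continuous where defined, $d_i \ge 0$ as a function.

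Next I would clear denominators and invoke Artin entrywise. Writing each $d_i = p_i/s_i$ in lowest terms, nonnegativity forces the polynomial $n_i := p_i s_i$ to be nonnegative and gives $d_i = n_i/s_i^2$. Writing the entries of $Ry$ over a common polynomial denominator $b(x)$ as $(Ry)_i = u_i/b$, where each $u_i$ is a polynomial in $x$ that is linear in $y$, we obtain
\[
y^T H y \;=\; (Ry)^T D (Ry) \;=\; \sum_{i=1}^m \frac{n_i}{s_i^2}\,\frac{u_i^2}{b^2}.
\]
Multiplying through by $b^2 \prod_j s_j^2$ yields the polynomial identity
\[
b^2 \Big(\textstyle\prod_{j} s_j^2\Big)\, y^T H y \;=\; \sum_{i=1}^m n_i\, u_i^2 \prod_{j\neq i} s_j^2 .
\]
By Theorem \ref{th:artin}, for each $i$ there is an sos polynomial $q_i(x)$ with $q_i n_i$ sos. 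Setting $Q := \prod_i q_i$ (a product of sos polynomials, hence sos) and multiplying the identity by $Q$, each summand on the right becomes a product of the sos factors $\prod_{k\neq i} q_k$, $q_i n_i$, $\prod_{j\neq i} s_j^2$, and the square $u_i^2$, and is therefore sos in $(x,y)$. Hence with $q(x) := Q(x)\, b(x)^2 \prod_j s_j(x)^2$ --- itself a product of an sos polynomial and squares, thus sos --- the polynomial $q(x)\cdot y^T H(x) y$ is a sum of squares, as required.

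The main obstacle, and the step I would treat most carefully, is the nonnegativity of the diagonal entries $d_i$ together with the bookkeeping of denominators: the density argument must be phrased so as to exclude the points where $P$ is singular or a denominator vanishes, and the congruence diagonalization over $K$ must be invoked in the form valid for possibly rank-deficient matrices (every symmetric matrix over a field of characteristic $\neq 2$ is congruent to a diagonal one, even when singular). A secondary point worth flagging is that each numerator $u_i$ is linear in $y$, so that $u_i^2$ is a genuine square of a polynomial in $(x,y)$ and the final expression is sos in the joint variables, matching the intended meaning of $y^T H_f(x) y$ being sos.
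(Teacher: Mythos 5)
Your proof is correct, but it takes a genuinely different route from the paper's. The paper's proof consumes, as a black box, the matrix Positivstellensatz of Procesi--Schacher, Gondard--Ribenboim, and Hillar--Nie: every symmetric polynomial matrix $H(x)$ that is positive semidefinite for all $x\in\mathbb{R}^n$ admits a decomposition $H(x)=\sum_i A_i(x)^2$ with the $A_i$ symmetric matrices of \emph{rational functions}; clearing denominators with a single polynomial $p(x)$ then gives $p(x)^2\, y^TH(x)y=\sum_i \|p(x)A_i(x)y\|^2$, so the multiplier is the perfect square $q=p^2$. You instead bypass the matrix theorem entirely: you diagonalize $H$ by congruence over the field $\mathbb{R}(x_1,\ldots,x_n)$, use a density-and-continuity argument to show the diagonal entries are nonnegative rational functions of the form $n_i/s_i^2$ with $n_i$ a nonnegative polynomial, and then apply the scalar Artin theorem (Theorem \ref{th:artin}, already available in the paper) entrywise to the $n_i$. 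The reduction is sound: congruence diagonalization does hold for possibly singular symmetric matrices over a field of characteristic $\neq 2$, your denominator bookkeeping yields a genuine polynomial identity, and since each $u_i$ is linear in $y$, every summand $\bigl(\prod_{k\neq i}q_k\bigr)(q_i n_i)\bigl(\prod_{j\neq i}s_j^2\bigr)u_i^2$ is a product of sos polynomials in $x$ with a square in $(x,y)$, hence sos in the joint variables; crucially, your multiplier $q(x)=Q(x)b(x)^2\prod_j s_j(x)^2$ depends on $x$ alone because the $y$-dependence is confined to the squares $u_i^2$ --- exactly the point you correctly flag as the heart of the proposition. What each approach buys: the paper's argument is shorter given the cited result and yields a multiplier that is a single square; yours is more self-contained, deriving the proposition from Hilbert's 17th problem plus linear algebra over the rational function field --- indeed, your diagonalization strategy is essentially the classical route to the Gondard--Ribenboim theorem, so in effect you have reproven the cited black box rather than invoked it.
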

\begin{proof}
	This is an immediate consequence of a theorem by Procesi
	and Schacher \cite{procesischaher} and independently Gondard and Ribenboim \cite{gondard}, reproven by Hillar and Nie \cite{hillarnie}. This theorem states that if $H(x)$ is a symmetric polynomial matrix that is positive semidefinite for all $x \in \mathbb{R}^n,$ then
	\begin{align}
	H(x)=\sum_i A_i(x)^2, \nonumber
	\end{align} where the matrices $A_i(x)$ are symmetric and have rational functions as entries. Let $p(x)$ be the polynomial obtained by multiplying all denominators of the rational functions involved in any of the matrices $A_i$. Note that $p(x)^2\cdot y^T H(x)y$ is a sum of squares as $$p(x)^2\cdot y^TH(x)y=\sum_i p(x)^2 y^TA_i(x)^2y=\sum_i ||p(x)A_i(x)y||^2.$$ However, $p(x) \cdot A_i(x)$ is now a matrix with polynomial entries, which gives the result.
\end{proof}
	We remark that this result does not immediately follow from the theorem given by Artin as the multiplier $q$ does not depend on $x$ and $y$, but solely on $x$. We now prove Theorem \ref{th:test.poly.norm}.}

\begin{proof}[Proof of Theorem \ref{th:test.poly.norm}]
	It is immediate to see that if there exist such a $c$, $r$, and $q$, then $f$ is convex and positive definite. From Theorem \ref{th:norm.str.conv}, this means that $f^{1/d}$ is a polynomial norm.
	
	Conversely, if $f^{1/d}$ is a polynomial norm, then, by Theorem \ref{th:norm.str.conv}, $f$ is convex and positive definite. As $f$ is convex, the polynomial $y^TH_f(x)y$ is nonnegative. {\ghh Using Proposition \ref{prop:mat.art}, we conclude that there exists an sos polynomial $q(x)$ such that $q(x,y) \cdot y^TH_f(x)y$ is sos. } We now show that, as $f$ is positive definite, there exist $c>0$ and $r \in \mathbb{N}$ such that  $\left(f(x)-c(\sum_i x_i^2)^{d/2}\right) (\sum_i x_i^2)^r$ is sos. Let $f_{min}$ denote the minimum of $f$ on the sphere. As $f$ is positive definite, $f_{min}>0.$ We take $c\mathrel{\mathop{:}}=\frac{f_{min}}{2}$ and consider $g(x)\mathrel{\mathop{:}}=f(x)-c(\sum_i x_i^2)^{d/2}$. We have that $g$ is a positive definite form: indeed, if $x$ is a nonzero vector in $\mathbb{R}^n$, then $$\frac{g(x)}{||x||^d}=\frac{f(x)}{||x||^d}-c=f \left( \frac{x}{||x||}\right)-c>0, $$
	by homogeneity of $f$ and definition of $c$. Using Theorem \ref{th:reznick}, $\exists r \in \mathbb{N}$ such that $g(x)(\sum_i x_i^2)^r$ is sos.
	
	For fixed $r$, a given form $f$, and a fixed degree $d$, one can search for $c>0$ and an sos form $q$ of degree $d$ such that {\ghh $q(x) \cdot y^T H_f(x) y$} is sos and $\left(f(x)-c(\sum_i x_i^2)^{d/2}\right) (\sum_i x_i^2)^r$ is sos using semidefinite programming. This is done by solving the following semidefinite feasibility problem:
	\begin{equation}\label{eq:SDP.test.poly.norm}
	\begin{aligned}
	&{\ghh q(x)} \text{ sos}\\
	&c \geq 0 \\
	& {\ghh q(x)} \cdot y^TH_f(x)y \text{ sos}\\
	& \left(f(x)-c \left(\sum_i x_i^2\right)^{d/2}\right) \left(\sum_i x_i^2\right)^r \text{ sos},
	\end{aligned}
\end{equation}
	where the unknowns are the coefficients of $q$ and the real number $c$.
\end{proof}

\begin{remark}
	We remark that we are not imposing $c>0$ in the semidefinite program above. This is because, in practice, especially if the semidefinite program is solved with interior point methods, the solution returned by the solver will be in the interior of the feasible set, and hence $c$ will automatically be positive.
	One can slightly modify (\ref{eq:SDP.test.poly.norm}) however to take the constraint $c>0$ into consideration {\rev explicitly}. Indeed, consider the following semidefinite feasibility problem where both the degree of $q$ and the integer $r$ are fixed:
	\begin{align}
	&{\ghh q(x)} \text{ sos} \nonumber\\
	& \gamma \geq 0 \nonumber \\
	& {\ghh q(x)} \cdot y^TH_f(x)y \text{ sos} \nonumber \\
	& \left(\gamma f(x)-\left(\sum_i x_i^2\right)^{d/2}\right) \left(\sum_i x_i^2\right)^r \text{ sos}. \label{eq:remark4.5}\\ \nonumber
	\end{align}
 It is easy to check that (\ref{eq:remark4.5}) is feasible with $\gamma \geq 0$ if and only if the last constraint of (\ref{eq:SDP.test.poly.norm}) is feasible with $c>0$. To see this, take $c=1/\gamma$ and note that $\gamma$ can never be zero.
\end{remark}

To the best of our knowledge, we cannot use the approach described in Theorem \ref{th:test.poly.norm} to optimize over the set of polynomial norms with a semidefinite program. This is because of the product of decision variables in the coefficients of $f$ and $q$. The next subsection will address this issue.

\subsection{Optimizing over the set of polynomial norms}\label{sec:opt}

In this subsection, we consider the problem of optimizing over the set of polynomial norms. To do this, we introduce the concept of $r$-sos-convexity. Recall that the notation $H_f$ references the Hessian matrix of a form $f$.

\subsubsection{Positive definite biforms and r-sos-convexity}

\begin{definition}\label{def:r.sos.convex}
	For an integer $r$, we say that a polynomial $f$ is $r$-sos-convex if $y^TH_f(x)y \cdot (\sum_i x_i^2)^r$ is sos.
\end{definition}

Observe that, for fixed $r$, the property of $r$-sos-convexity can be checked using semidefinite programming
 (though the size of this SDP gets larger as $r$ increases). Any polynomial that is $r$-sos-convex is convex.
  Note that the set of $r$-sos-convex polynomials is a subset of the set of $(r+1)$-sos-convex polynomials and that the case $r=0$
  corresponds to the set of sos-convex polynomials. {\ghh We remark that $f_d$ in Theorem \ref{th:approx.poly.norm.sphere} is in fact sos-convex, since
  it is a sum of squares of linear forms.} Thus Theorem \ref{th:approx.poly.norm.sphere} implies that any norm on $\mathbb{R}^n$
may be approximated arbitrarily well by a polynomial norm that corresponds to a sos-convex form.

It is natural to ask whether any convex polynomial $f$ is $r$-sos-convex for some $r$. Our next theorem shows that this is the case {\rev provided that the biform $y^TH_f(x)y$, where $H_f(x)$ is the Hessian of $f$, is positive over the bi-sphere.}

\begin{theorem}\label{th:r.sos.convex}
	Let $f$ be a form of degree $d$ such that $y^TH_f(x)y > 0$ for $(x,y) \in S^{n-1} \times S^{n-1}$. Let $$\eta(f)\mathrel{\mathop{:}}= \frac{\min\{y^TH_f(x)y~|~ (x,y) \in S^{n-1} \times S^{n-1} \}}{\max\{y^TH_f(x)y~|~ (x,y) \in S^{n-1} \times S^{n-1}\}}.$$
If $r\geq  \frac{n(d-2)(d-3)}{4 \log(2)\eta(f)}-\frac{n+d-2}{2}-d$, then $f$ is $r$-sos-convex.
\end{theorem}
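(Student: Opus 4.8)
The plan is to reduce the statement to Reznick's theorem (Theorem~\ref{th:reznick}) applied in the $x$-variables, while crucially exploiting the fact that Reznick's sum-of-powers representation is \emph{linear} in the form being represented. Write $p(x,y) := y^T H_f(x) y$. Since $f$ has degree $d$, the entries of $H_f(x)$ are forms of degree $d-2$, so $p$ is a biform: a form of degree $d-2$ in $x$ and of degree $2$ in $y$, and the hypothesis says $p>0$ on $S^{n-1}\times S^{n-1}$. By Definition~\ref{def:r.sos.convex}, $f$ is $r$-sos-convex exactly when $\left(\sum_i x_i^2\right)^r p(x,y)$ is sos, so it suffices to produce such a representation for the claimed range of $r$.

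First I would fix a unit vector $y$ and view $g_y(x):=p(x,y)$ as a positive definite form of degree $d-2$ in the $n$ variables $x$. Comparing extreme values over the bi-sphere shows $\epsilon(g_y) \ge \eta(f)$ for every unit $y$, so Reznick's threshold can be applied \emph{uniformly} in $y$ with ratio $\eta(f)$, degree $d-2$, and $n$ variables; this is the source of the numerator $n(d-2)(d-3)$ and of the term $-\tfrac{n+d-2}{2}$ in the bound. The key point is that the \emph{proof} of Theorem~\ref{th:reznick} (not merely its statement) yields an integral representation
\[
\left(\sum_i x_i^2\right)^r g(x) = \int_{S^{n-1}} \langle x,u\rangle^{2r+d-2}\,\Lambda_r[g](u)\,d\sigma(u),
\]
where $d\sigma$ is the uniform measure on $S^{n-1}$ and $\Lambda_r$ is a fixed linear operator on degree-$(d-2)$ forms that does \emph{not} depend on $g$; the content of Reznick's estimate is precisely that $\Lambda_r[g]\ge 0$ on $S^{n-1}$ once $r$ exceeds the threshold and $g>0$ on the sphere. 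Applying this with $g=g_y$ and using linearity of $\Lambda_r$ together with the fact that the coefficients of $g_y$ are quadratic in $y$, the density becomes $\Lambda_r[g_y](u)=y^T W_r(u) y$ for a symmetric matrix $W_r(u)$ with entries forms of degree $d-2$ in $u$; the uniform bound $\epsilon(g_y)\ge\eta(f)$ then forces $y^T W_r(u) y \ge 0$ for all $y$ and all $u\in S^{n-1}$, i.e.\ $W_r(u)\succeq 0$ on the sphere.

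To turn the integral into an honest sos polynomial I would discretize it by a positive-weight cubature formula on $S^{n-1}$ that is exact for the (fixed, finite-dimensional) space of integrands, which are polynomials of degree $2r+2d-4$ in $u$; such formulas exist with nonnegative weights. This gives
\[
\left(\sum_i x_i^2\right)^r p(x,y) = \sum_j c_j\,\langle x,u_j\rangle^{2r+d-2}\, y^T W_r(u_j) y,\qquad c_j>0.
\]
Since $d$ is even, $\langle x,u_j\rangle^{2r+d-2}=\bigl(\langle x,u_j\rangle^{\,r+(d-2)/2}\bigr)^2$ is a square, and each \emph{constant} matrix $W_r(u_j)\succeq 0$ makes $y^T W_r(u_j) y$ a sum of squares of linear forms in $y$; hence every summand, and therefore the whole expression, is sos in $(x,y)$, proving $r$-sos-convexity.

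The main obstacle is exactly the passage from ``sos in $x$ for each fixed $y$'' (which a black-box use of Theorem~\ref{th:reznick} would give) to ``jointly sos in $(x,y)$'': a biform positive on the bi-sphere is \emph{not} positive definite on $S^{2n-1}$ (it vanishes wherever $x=0$ or $y=0$), so Reznick cannot be invoked directly on the $2n$ variables. The resolution is the linearity and $g$-independence of his representation, which lets the spherical density inherit a positive semidefinite quadratic structure in $y$ that survives a positive cubature. A secondary technical point is matching the stated constant precisely: recovering the refinement $-d$ beyond the naive $-\tfrac{n+d-2}{2}$ requires a careful quantitative accounting of when $\Lambda_r[g]\ge 0$ adapted to the biform setting, and this bookkeeping is where I expect the estimate to be most delicate.
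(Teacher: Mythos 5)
Your proposal is essentially the paper's own proof in different packaging: fixing $y$ and using $\epsilon(F_y)\ge\eta(f)$ to apply Reznick's threshold uniformly is exactly Lemma \ref{lem:Phi.psd}; your $g$-independent linear operator $\Lambda_r$ applied entrywise to the Hessian (so that the spherical density becomes $y^TW_r(u)y$) is the paper's $\Psi_{s,x}^{-1}$, i.e., $\Phi_s^{-1}$ applied to each entry $p_{ij}$; and the role of your positive-weight cubature is played by Hilbert's identity (Proposition \ref{prop:sq2lin}), which in Lemma \ref{lem:F.sos} produces the very same decomposition of $\left(\sum_i x_i^2\right)^r y^TH_f(x)y$ into terms of the form (even power of a linear form in $x$) times (positive semidefinite quadratic in $y$). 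The one loose end you flag honestly is real but shared with the source: both your accounting and a careful reading of the paper's (whose lemmas are invoked in the final proof with the degree parameter of $f$ rather than the $x$-degree $d-2$ of the biform $y^TH_f(x)y$) support the threshold $r\ge \frac{n(d-2)(d-3)}{4\log(2)\eta(f)}-\frac{n+d-2}{2}$, and the extra $-d$ in the stated bound arises from the paper's bookkeeping choice $s=r+d$ rather than from any idea missing in your approach.
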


\begin{remark}
	Note that $\eta(f)$ can also be interpreted as $$\eta(f)  = \frac{\min_{x \in S^{n-1}} \lambda_{\min} (H_f(x)) }{\max_{x \in S^{n-1}} \lambda_{\max}(H_f(x))}=\frac{1}{ \max_{x \in S^{n-1}} \|H^{-1}_f(x)\|_2 \cdot \max_{x \in S^{n-1}} \|H_f(x)\|_2 }.$$

\end{remark}

\begin{remark}
Theorem \ref{th:r.sos.convex} is a generalization of Theorem \ref{th:reznick} by Reznick. Note though that this is not an immediate generalization. First, $y^TH_f(x)y$ is not a positive definite form (consider, e.g., $y=0$ and any nonzero $x$). Secondly, note that the multiplier is $(\sum_i x_i^2)^r$ and does not involve the $y$ variables. (As we will see in the proof, this is essentially because $y^TH_f(x)y$ is quadratic in $y$.) {\ghh It is not immediate that the multiplier should have this specific form. From Theorem \ref{th:reznick}, it may perhaps seem more natural that the multiplier be $(\sum_i x_i^2+\sum_iy_i^2)^r$. It turns out in fact that such a multiplier would not give us the correct property (contrarily to $(\sum_i x_i^2)^r$) as there exist forms $f$ whose Hessian is positive definite for all $x$ but for which the form $$y^TH_f(x)y (\sum_i x_i^2+\sum_i y_i^2)^r$$ is not sos for any $r$. For a specific example, consider the form $f$ in $3$ variables and of degree $8$ given in \cite[Theorem 3.2]{AAA_PP_not_sos_convex_journal}. It is shown in \cite{AAA_PP_not_sos_convex_journal} that (i) $f$ is convex; (ii) the $(1,1)$ entry of the Hessian of $f$, $H_f^{1,1}$, is not sos; and (iii) $(x_1^2+x_2^2+x_3^2)y^TH_f(x)y$ is sos.
		
Suppose for the sake of contradiction that $$q_r(x,y)\mathrel{\mathop{:}}=y^TH_f(x)y (\sum_i x_i^2+\sum_i y_i^2)^r$$ is sos for some $r$. This implies that the polynomial $$q_r(x,\alpha,0,0)=\alpha^2 H_p^{1,1} (x) \left( \sum_{i=1}^3 x_i^2+\alpha^2 \right)^r$$ should be sos for any $\alpha$ as it is obtained from $q_r$ by setting $y=(\alpha,0,0)^T$. Expanding this out, we get
	\begin{align*}
	q_r(x,\alpha,0,0)&= \alpha^2H_f^{1,1}(x) \sum_{k=0}^r \binom{r}{k} \left( \sum_{i=1}^3 x_i^2 \right)^{k}\alpha^{2r-2k}\\
	&= \alpha^{2r+2}H_f^{1,1}(x)+\sum_{k=1}^r  \binom{r}{k} \left( \sum_{i=1}^3 x_i^2 \right)^{k}\alpha^{2r-2k+2}H_f^{1,1}(x)\\
	&=\alpha^{2r+2} \left( H_f^{1,1}(x)+\sum_{k=1}^r  \binom{r}{k} \left( \sum_{i=1}^3 x_i^2 \right)^{k}\frac{1}{\alpha^{2k}}H_f^{1,1}(x) \right).
	\end{align*}
	From arguments (ii) and (iii) above, we know that $H_f^{1,1}(x)$ is not sos but $$\sum_{k=1}^r  \binom{r}{k} \left( \sum_{i=1}^3 x_i^2 \right)^{k}\frac{1}{\alpha^{2k}}H_f^{1,1}(x)$$ is sos. Fixing $\alpha$ large enough, we can ensure that $q_r(x,\alpha,0,0)$ is not sos. This contradicts our previous assumption.

}
\end{remark}

\begin{remark}
	Theorem \ref{th:r.sos.convex} can easily be adapted to biforms of the type $\sum_j f_j(x)g_j(y)$ where $f_j$'s are forms of degree $d$ in $x$ and $g_j$'s are forms of degree $\tilde{d}$ in $y$. In this case, there exist integers $s,r$ such that $$\sum_j f_j(x)g_j(y) \cdot (\sum_i x_i^2)^r \cdot (\sum_i y_i^2)^s$$ is sos. For the purposes of this paper however and the connection to polynomial norms, we will show the result in the particular case where the biform of interest is $y^TH_f(x)y.$
\end{remark}

We associate to any form $f \in H_{n,d}$, the $d$-th order differential operator $f(D)$, defined by replacing each occurence of $x_j$ with $\frac{\partial }{\partial x_j}$. For example, if $f(x_1,\ldots,x_n)\mathrel{\mathop{:}}=\sum_i c_i x_1^{a_1^i}\ldots x_{n}^{a_i^n}$ where $c_i \in \mathbb{R}$ and $a_{j}^i \in \mathbb{N}$, then its differential operator will be $$f(D)=\sum_{i} c_i \frac{\partial^{a_1^i}}{\partial x_1^{a_1^i}}\ldots \frac{\partial^{a_n^i}}{\partial x_n^{a_n^i}}.$$

Our proof will follow the structure of the proof of Theorem \ref{th:reznick} given in \cite{Reznick_Unif_denominator} and reutilize some of the results given in the paper which we quote here for clarity of exposition.

\begin{prop}[\cite{Reznick_Unif_denominator}, see Proposition 2.6]\label{prop:sq2lin}
	For any nonnegative integer $r$, there exist nonnegative rationals $\lambda_k$ and integers $\alpha_{kl}$ such that
	$$(x_1^2+\ldots+x_n^2)^r=\sum_k \lambda_k(\alpha_{k1}x_1+\ldots+\alpha_{kn}x_n)^{2r}.$$
\end{prop}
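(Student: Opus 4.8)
The plan is to prove the statement nonconstructively: first show that $(\sum_i x_i^2)^r$ lies in the \emph{interior} of the convex cone generated by $2r$-th powers of real linear forms, and then use interiority to replace the real forms by integer forms and the real weights by nonnegative rationals. I would work in the finite-dimensional space $H_{n,2r}$ of degree-$2r$ forms, identifying a linear form $\sum_l \beta_l x_l$ with its coefficient vector, and set $C \mathrel{\mathop{:}}= \{\sum_k c_k \ell_k^{2r} \mid c_k \ge 0,\ \ell_k \in \mathbb{R}^n\}$. The starting point is the integral identity
\[
(x_1^2+\cdots+x_n^2)^r \;=\; c_{n,r}\int_{S^{n-1}} \langle x,u\rangle^{2r}\, d\sigma(u),
\]
where $\sigma$ is the uniform probability measure on $S^{n-1}$ and $c_{n,r}>0$. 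This holds because the right-hand side is an $O(n)$-invariant form of degree $2r$ in $x$, and the only such forms are scalar multiples of $(\sum_i x_i^2)^r$; the constant is positive because the integrand is nonnegative and positive for generic $u$. In particular this already shows $(\sum_i x_i^2)^r \in C$.

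Next I would upgrade membership to interiority. First, $C$ is closed, being the cone generated by the compact set $K=\{\ell_u^{2r}\mid u\in S^{n-1}\}$ with $0\notin \conv{K}$ (any convex combination of the $\ell_u^{2r}$ is a nonzero nonnegative form). Suppose for contradiction that $(\sum_i x_i^2)^r$ lay on the boundary of $C$. Then a supporting hyperplane gives a nonzero linear functional $L$ on $H_{n,2r}$ with $L(\ell^{2r})\ge 0$ for all $\ell$ and $L\big((\sum_i x_i^2)^r\big)=0$. Applying $L$ to the integral identity yields $0 = c_{n,r}\int_{S^{n-1}} L(\langle x,u\rangle^{2r})\,d\sigma(u)$, an integral of a continuous nonnegative function, so $L(\ell_u^{2r})=0$ for all $u\in S^{n-1}$, hence $L(\ell^{2r})=0$ for all $\ell$ by homogeneity. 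But the powers $\{\ell^{2r}\}$ span $H_{n,2r}$: writing any functional as $g\mapsto p(D)g$ via the apolar pairing, the computation $p(D)(\sum_l \alpha_l x_l)^{2r}=(2r)!\,p(\alpha)$ shows that a functional vanishing on all powers forces $p\equiv 0$. Thus $L=0$, a contradiction, and $(\sum_i x_i^2)^r \in \mathrm{int}(C)$.

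Finally I would rationalize. Let $C_{\mathbb{Q}}$ be the subcone generated by $2r$-th powers of \emph{rational} linear forms. Since rational forms are dense and generic powers form a basis, $C_{\mathbb{Q}}$ contains a simplicial cone spanned by a rational basis of powers, hence has nonempty interior, and $\overline{C_{\mathbb{Q}}}=C$ by continuity and closedness of $C$. For a convex set with nonempty interior the interior of the closure equals the interior, so $\mathrm{int}(C)=\mathrm{int}(\overline{C_{\mathbb{Q}}})=\mathrm{int}(C_{\mathbb{Q}})\subseteq C_{\mathbb{Q}}$, giving a \emph{finite} representation $(\sum_i x_i^2)^r=\sum_k c_k \ell_k^{2r}$ with $c_k\ge 0$ real and $\ell_k$ rational. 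Clearing the denominator of each $\ell_k$ into its weight makes every $\ell_k=\sum_l \alpha_{kl}x_l$ an integer form. Now all coefficient vectors are integral, so $c$ is a real solution of a linear system $Mc=b$, $c\ge 0$ with rational data; being feasible over $\mathbb{R}$ it is feasible over $\mathbb{Q}$, so the weights $\lambda_k\mathrel{\mathop{:}}=c_k$ may be taken nonnegative rational, yielding the claimed identity.

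The genuinely delicate point will be the rationalization rather than the existence of a real representation: a bare nonnegative-combination representation produces irrational directions and weights, and it is precisely the interiority of $(\sum_i x_i^2)^r$ in $C$ that makes the representation robust under perturbing the directions to nearby rational ones, after which the rational-polyhedron fact disposes of the weights. Establishing interiority — the separation argument together with the spanning property of the powers — therefore carries the main weight of the proof; the rest is bookkeeping.
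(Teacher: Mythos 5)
Your proof is correct, and note first that the paper offers no proof of its own here: Proposition \ref{prop:sq2lin} is quoted verbatim from Reznick's paper, where the identity is classical (it goes back to Hilbert's 1909 work on Waring's problem, and the argument you give is essentially the standard nonconstructive proof of these ``Hilbert identities''). All the load-bearing steps check out. The integral identity follows from $O(n)$-invariance as you say; the cone $C$ is closed because it is generated by the compact set $\{\ell_u^{2r} \,:\, u \in S^{n-1}\}$ whose convex hull misses the origin; the supporting-hyperplane step does not presuppose that $C$ has nonempty interior (indeed your contradiction simultaneously establishes that it does); the spanning of $H_{n,2r}$ by $2r$-th powers via the apolar identity $p(D)(\alpha^T x)^{2r} = (2r)!\,p(\alpha)$ is the same differential-operator mechanism the paper itself invokes later (Proposition \ref{prop:diff2sum}); and the two-stage rationalization --- first perturbing directions to rational ones using interiority together with the fact that $\mathrm{int}(\overline{A}) = \mathrm{int}(A)$ for a convex set $A$ with nonempty interior, then replacing the real weights by rational ones via a basic feasible solution of a linear program with rational data --- is airtight. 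Your closing diagnosis is also accurate: interiority is the crux, since bare membership of $(\sum_i x_i^2)^r$ in $C$ (which the integral representation already yields) is not robust under rational perturbation of the directions. The only thing your route gives up, relative to the explicit constructions in Reznick's earlier work on sums of even powers of linear forms, is control over the data: a constructive quadrature produces concrete $\lambda_k$ and $\alpha_k$, whereas your proof certifies existence only (though applying Carath\'eodory's theorem in your argument would at least bound the number of terms by $\binom{n+2r-1}{2r}+1$). Existence is all the present paper needs, since Proposition \ref{prop:sq2lin} enters only qualitatively, in the proof of Lemma \ref{lem:F.sos}.
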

For simplicity of notation, we will let $\alpha_k\mathrel{\mathop{:}}=(\alpha_{k1},\ldots,\alpha_{kl})^T$ and $x\mathrel{\mathop{:}}=(x_1,\ldots,x_n)^T$. Hence, we will write $\sum_k \lambda_k(\alpha_k^Tx)^{2r}$ to mean $\sum_k \lambda_k (a_{k1}x_1+\ldots+a_{kn}x_n)^{2r}$.

\begin{prop}[\cite{Reznick_Unif_denominator}, see Proposition 2.8]\label{prop:diff2sum}
	If $g \in H_{n,e}$ and $h=\sum_k \lambda_k (\alpha_k^Tx)^{d+e} \in H_{n,d+e}$, then $$g(D)h=(d+e)_e \sum_k \lambda_k g(\alpha_k) (\alpha_k^Tx)^d.$$
\end{prop}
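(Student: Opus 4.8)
The plan is to exploit the linearity of the operator $g(D)$ both in $g$ and in $h$, reducing the identity to a single elementary computation. Since $g(D)$ acts linearly on $h$, it suffices to establish the claim for one summand, i.e.\ to show that for any vector $\alpha$ and any $g \in H_{n,e}$,
$$g(D)\left[(\alpha^Tx)^{d+e}\right] = (d+e)_e\, g(\alpha)\, (\alpha^Tx)^d;$$
summing this over $k$ with weights $\lambda_k$ then yields the stated formula. Moreover, writing $g = \sum_{|\beta|=e} c_\beta x^\beta$ so that $g(D) = \sum_{|\beta|=e} c_\beta D^\beta$ with $D^\beta = \partial_{x_1}^{\beta_1}\cdots\partial_{x_n}^{\beta_n}$, linearity in $g$ further reduces the task to computing $D^\beta\left[(\alpha^Tx)^{d+e}\right]$ for a single monomial differential operator.

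First I would record the one-variable rule $\partial_{x_j}^{\beta_j}(\alpha^Tx)^m = (m)_{\beta_j}\,\alpha_j^{\beta_j}\,(\alpha^Tx)^{m-\beta_j}$, which follows by induction from $\partial_{x_j}(\alpha^Tx)^m = m\,\alpha_j\,(\alpha^Tx)^{m-1}$. Applying the factors of $D^\beta$ one variable at a time to $(\alpha^Tx)^{d+e}$, each application lowers the exponent of $\alpha^Tx$ by $\beta_j$ and contributes a falling-factorial factor; after all $n$ variables the exponent has dropped from $d+e$ to $d+e-|\beta| = d$, since $|\beta|=e$. The accumulated scalar is the product
$$(d+e)_{\beta_1}\,(d+e-\beta_1)_{\beta_2}\cdots(d+e-\beta_1-\cdots-\beta_{n-1})_{\beta_n}\;\alpha^\beta,$$
and the telescoping of consecutive falling factorials collapses this product to $(d+e)(d+e-1)\cdots(d+1) = (d+e)_e$, independent of $\beta$. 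Hence $D^\beta\left[(\alpha^Tx)^{d+e}\right] = (d+e)_e\,\alpha^\beta\,(\alpha^Tx)^d$.

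The final step would be to reassemble: since the scalar $(d+e)_e$ does not depend on $\beta$, it factors out of the sum over $\beta$, leaving $\sum_{|\beta|=e} c_\beta \alpha^\beta = g(\alpha)$ as the coefficient of $(\alpha^Tx)^d$, which is exactly the single-term identity above; summing over $k$ completes the argument. The only point requiring care is verifying that the product of falling factorials telescopes to $(d+e)_e$ for every $\beta$ with $|\beta|=e$. This $\beta$-independence is precisely what lets $g(\alpha)$ emerge cleanly as a common factor, and it is the conceptual heart of an otherwise routine differentiation.
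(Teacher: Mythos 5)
Your proof is correct: the reduction by linearity in $h$ and in $g$ to the single computation $D^\beta\bigl[(\alpha^Tx)^{d+e}\bigr]=(d+e)_e\,\alpha^\beta\,(\alpha^Tx)^d$ is valid (partial derivatives commute, the exponent never drops below $d$ since $|\beta|=e$, and the product $(d+e)_{\beta_1}(d+e-\beta_1)_{\beta_2}\cdots$ does telescope to $(d+e)(d+e-1)\cdots(d+1)=(d+e)_e$ under the paper's falling-factorial convention, independently of $\beta$, so $g(\alpha)$ factors out as claimed). Note that the paper itself gives no proof of this statement --- it is quoted verbatim as Proposition 2.8 of Reznick's paper \cite{Reznick_Unif_denominator} --- and your argument is essentially the standard elementary verification underlying that cited result, so there is no substantive divergence to report.
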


\begin{prop}[\cite{Reznick_Unif_denominator}, see {\rev Theorems 3.7 and 3.9}]\label{prop:def.Phi} For $f \in H_{n,d}$ and $s \geq d$, we define $\Phi_s(f) \in H_{n,d}$ by
	\begin{align}\label{eq:def.Phi}
	f(D) (x_1^2+\ldots+x_n^2)^s=\mathrel{\mathop{:}} \Phi_s(f) (x_1^2+\ldots+x_n^2)^{s-d}.
	\end{align}
The inverse $\Phi_s^{-1}(f)$ of $\Phi_s(f)$ exists and this is a map verifying $\Phi_s(\Phi_s^{-1}(f))=f.$
\end{prop}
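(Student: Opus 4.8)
The plan is to establish two separate things. First, I must show that the defining equation \eqref{eq:def.Phi} actually makes sense: that $f(D)(\sum_i x_i^2)^s$ is genuinely divisible by $(\sum_i x_i^2)^{s-d}$, with a quotient $\Phi_s(f)$ that is a form of degree $d$. Second, I must show that the resulting map $\Phi_s : H_{n,d} \to H_{n,d}$ is a bijection, which is exactly what the existence of $\Phi_s^{-1}$ with $\Phi_s(\Phi_s^{-1}(f))=f$ asserts. Note that $\Phi_s$ is automatically \emph{linear}, since $f \mapsto f(D)$ is linear and the denominator $(\sum_i x_i^2)^{s-d}$ is fixed; so invertibility is a statement about a linear endomorphism of the finite-dimensional space $H_{n,d}$, and it suffices to show $\Phi_s$ has no zero eigenvalue.

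For well-definedness I would argue by induction on $d$ that $(\sum_i x_i^2)^{s-d}$ divides $f(D)(\sum_i x_i^2)^s$ for every $s \ge d$. The case $d=0$ is immediate. For the inductive step, linearity lets me take $f = x_j m$ with $m$ a monomial of degree $d-1$; since constant-coefficient differential operators commute, $f(D) = \partial_{x_j}\circ m(D)$, and the induction hypothesis (valid because $s \ge d > d-1$) gives $m(D)(\sum_i x_i^2)^s = (\sum_i x_i^2)^{s-d+1}\,\Phi_s(m)$ with $\Phi_s(m)\in H_{n,d-1}$. The product rule then exhibits $f(D)(\sum_i x_i^2)^s = \partial_{x_j}\big[(\sum_i x_i^2)^{s-d+1}\Phi_s(m)\big]$ as $(\sum_i x_i^2)^{s-d}$ times the degree-$d$ form $2(s-d+1)x_j\Phi_s(m) + (\sum_i x_i^2)\partial_{x_j}\Phi_s(m)$, which simultaneously proves divisibility and confirms $\Phi_s(f)\in H_{n,d}$.

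For invertibility I would diagonalize $\Phi_s$ against the classical harmonic decomposition $H_{n,d} = \bigoplus_{k=0}^{\lfloor d/2\rfloor} (\sum_i x_i^2)^k\,\mathcal{H}_{n,d-2k}$, where $\mathcal{H}_{n,e}$ denotes the harmonic forms of degree $e$. I claim $\Phi_s$ acts as a positive scalar on each summand. Writing $f = (\sum_i x_i^2)^k h$ with $h$ harmonic of degree $e = d-2k$ and using $f(D) = \Delta^k\circ h(D)$ for $\Delta = \sum_i \partial^2/\partial x_i^2$, I would invoke the two standard spherical-harmonic identities $h(D)(\sum_i x_i^2)^s = 2^e (s)_e (\sum_i x_i^2)^{s-e} h$ and $\Delta[(\sum_i x_i^2)^m h] = 2m\,(2m+2e+n-2)\,(\sum_i x_i^2)^{m-1}h$. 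Applying the Laplacian identity $k$ times to $(\sum_i x_i^2)^{s-e}h$ yields $f(D)(\sum_i x_i^2)^s = \lambda_k\,(\sum_i x_i^2)^{s-d+k}h$, where $\lambda_k$ is the explicit product of $2^e(s)_e$ with the factors $2(s-e-j)\,(2(s-j)+n-2)$ for $j=0,\ldots,k-1$; dividing by $(\sum_i x_i^2)^{s-d}$ gives $\Phi_s(f) = \lambda_k (\sum_i x_i^2)^k h = \lambda_k f$. The hypothesis $s\ge d$ forces every factor to be strictly positive (the exponents $s-e$, $s-d+k$, and $s-j$ stay nonnegative, and $(s)_e$ does not vanish since $s\ge e$), so each eigenvalue $\lambda_k>0$. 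Hence $\Phi_s$ is diagonal with positive eigenvalues, invertible, with $\Phi_s^{-1}$ acting by $\lambda_k^{-1}$ on each summand and satisfying $\Phi_s(\Phi_s^{-1}(f))=f$.

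The main obstacle is justifying the two harmonic identities and verifying that all exponents and falling factorials remain in their valid ranges; this is exactly where $s\ge d$ is indispensable, for if $s<d$ the factor $(s)_e$ with $e>s$ would vanish and $\Phi_s$ would fail to be injective. I remark that an alternative to the inductive first step is to combine \propref{prop:sq2lin} and \propref{prop:diff2sum}: writing $(\sum_i x_i^2)^s = \sum_k \lambda_k(\alpha_k^T x)^{2s}$ and applying the operator identity yields the closed form $f(D)(\sum_i x_i^2)^s = (2s)_d \sum_k \lambda_k f(\alpha_k)(\alpha_k^T x)^{2s-d}$. This is a clean formula, but divisibility by $(\sum_i x_i^2)^{s-d}$ is far from transparent in it, so I would keep the inductive argument for well-definedness and reserve the harmonic diagonalization for invertibility.
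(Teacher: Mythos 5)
The paper does not actually prove this proposition --- it imports it verbatim from Reznick \cite{Reznick_Unif_denominator} (Theorems 3.7 and 3.9), and your argument is correct and reconstructs essentially Reznick's original proof: well-definedness of the quotient, followed by diagonalizing $\Phi_s$ on the harmonic decomposition $H_{n,d}=\bigoplus_k (\sum_i x_i^2)^k\,\mathcal{H}_{n,d-2k}$ via the identities $h(D)(\sum_i x_i^2)^s = 2^e (s)_e (\sum_i x_i^2)^{s-e}h$ and the Laplacian recursion, with eigenvalues $\lambda_k = 2^e(s)_e\prod_{j=0}^{k-1}2(s-e-j)\bigl(2(s-j)+n-2\bigr)$ that are strictly positive precisely because $s\geq d$. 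Your computations check out (including the observation that $s<d$ would annihilate the degree-$d$ harmonic component), so there is nothing to correct.
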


\begin{prop}[\cite{Reznick_Unif_denominator}, see Theorem 3.12 ]\label{prop:pd.Phi}
	Suppose $f$ is a positive definite form in $n$ variables and of degree $d$ and let $$\epsilon(f)=\frac{\min\{f(u)~|~u \in S^{n-1}\}}{\max\{f(u)~|~u \in S^{n-1}\}}.$$ If $s\geq \frac{nd(d-1)}{4\log(2)\epsilon(f)}-\frac{n-d}{2}$, then $\Phi^{-1}_s(f) \in P_{n,d}.$
	
\end{prop}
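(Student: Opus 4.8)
The plan is to prove the proposition exactly as Reznick does for Theorem 3.12 of \cite{Reznick_Unif_denominator}, by \emph{diagonalizing} the linear operator $\Phi_s$ on $H_{n,d}$ and then showing that, once $s$ is large, $\Phi_s^{-1}$ is so close to a positive scalar multiple of the identity that it cannot destroy the positive definiteness of $f$ as quantified by $\epsilon(f)$. First I would observe that $\Phi_s$, being defined through \eqref{eq:def.Phi} from the rotation-invariant object $(x_1^2+\cdots+x_n^2)^s$, commutes with the natural action of $O(n)$ on $H_{n,d}$, and is self-adjoint for the apolar inner product $\langle p,q\rangle \propto p(D)q$. Using the Fischer (spherical-harmonic) decomposition $H_{n,d}=\bigoplus_{j=0}^{\lfloor d/2\rfloor}(x_1^2+\cdots+x_n^2)^j\,\mathcal H_{d-2j}$, where $\mathcal H_{m}$ is the space of harmonic forms of degree $m$, the summands are pairwise non-isomorphic irreducible $O(n)$-representations, so Schur's lemma forces $\Phi_s$ to act as a single scalar $\lambda_{s,j}$ on each $(x_1^2+\cdots+x_n^2)^j\mathcal H_{d-2j}$.

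Next I would compute these eigenvalues in closed form. Feeding a harmonic form $h$ of degree $d-2j$ into the defining relation \eqref{eq:def.Phi} and invoking the classical identity for $h(D)$ applied to a radial form (the same circle of computations behind Propositions~\ref{prop:sq2lin} and \ref{prop:diff2sum}), each $\lambda_{s,j}$ comes out as a ratio of falling factorials in $s$, in the notation $(t)_k$ introduced above. Consequently, if $f=\sum_j (x_1^2+\cdots+x_n^2)^j h_{d-2j}$ is the Fischer decomposition of $f$, then $\Phi_s^{-1}(f)=\sum_j \lambda_{s,j}^{-1}(x_1^2+\cdots+x_n^2)^j h_{d-2j}$, and Proposition~\ref{prop:def.Phi} guarantees this inverse is well defined on $H_{n,d}$.

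With the $\lambda_{s,j}$ explicit, the quantitative heart of the argument is a bound of the form $1\ge \lambda_{s,j}/\lambda_{s,0}\ge 1-\delta(s)$ for all $j$, with $\delta(s)\to 0$. Each such ratio is a product of at most $d/2$ factors of shape $1-O\!\big(1/(2s+n-d)\big)$; applying $\log(1-t)\le -t$ and writing $2^{-1}=e^{-\log 2}$ is what produces both the $\log 2$ and the combination $nd(d-1)$ in the stated threshold, and the calibration shows that the hypothesis $s\ge \frac{nd(d-1)}{4\log(2)\epsilon(f)}-\frac{n-d}{2}$ makes $\delta(s)$ small relative to $\epsilon(f)$. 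Finally I would restrict to $S^{n-1}$, where $(x_1^2+\cdots+x_n^2)^j\equiv 1$, so that for $u\in S^{n-1}$ one has $\Phi_s^{-1}(f)(u)=\lambda_{s,0}^{-1}\big(f(u)+\sum_j(\lambda_{s,0}/\lambda_{s,j}-1)h_{d-2j}(u)\big)$. Since $f(u)\ge \min_{S^{n-1}}f=\epsilon(f)\max_{S^{n-1}}f>0$ while the error term is controlled by $\delta(s)$ times the sup-norm of the harmonic pieces of $f$, the threshold forces the bracket to stay strictly positive on the sphere; homogeneity then upgrades this to $\Phi_s^{-1}(f)\in P_{n,d}$, as claimed.

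The step I expect to be the main obstacle is the last quantitative one: bounding $\sum_j|h_{d-2j}(u)|$ (equivalently, the sup-norms of the individual harmonic components, whose Fischer projections are \emph{not} sup-norm contractions) by a controlled multiple of $\max_{S^{n-1}}f$, and then matching all constants so that precisely the stated bound on $s$, rather than a weaker one, emerges. Extracting the clean eigenvalue-ratio estimate with exactly the $4\log 2$ denominator is the delicate bookkeeping that makes the theorem tight.
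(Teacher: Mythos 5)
You are comparing against a proof that this paper deliberately does not contain: Proposition~\ref{prop:pd.Phi} is quoted verbatim from \cite{Reznick_Unif_denominator} (Theorem 3.12 there) as one of the black-box ingredients for the proof of Theorem~\ref{th:r.sos.convex}, so your proposal must be measured against Reznick's original argument. Your structural setup is sound and is indeed part of the standard picture: $\Phi_s$ commutes with the $O(n)$-action, it is diagonal on the Fischer decomposition $H_{n,d}=\bigoplus_j (x_1^2+\cdots+x_n^2)^j\,\mathcal{H}_{d-2j}$ with eigenvalues that are ratios of falling factorials in $s$, and your formula for $\Phi_s^{-1}(f)(u)$ on $S^{n-1}$ is correct. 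The genuine gap is exactly the step you flag yourself: to conclude, you must bound the sup-norms of the harmonic components $h_{d-2j}$ by a controlled multiple of $\max_{S^{n-1}}|f|$, i.e., you need the Fischer projections to be bounded operators on $\left(C(S^{n-1}),\|\cdot\|_\infty\right)$ with constants that do not pollute the threshold. They are not: the projection onto a single harmonic component is convolution with a Gegenbauer (zonal) kernel whose $L^1$-norm grows polynomially in the degree (roughly like $d^{(n-2)/2}$ for $n\geq 3$), so the diagonalization route as written yields a threshold with extra dimension- and degree-dependent factors, and in particular cannot produce the precise constant $\frac{nd(d-1)}{4\log(2)\epsilon(f)}-\frac{n-d}{2}$ of the statement.

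Reznick's proof avoids harmonic projections in the quantitative step altogether. Rather than separating $f$ into harmonic pieces, he expands $\Phi_s^{-1}(f)$ as a finite series in powers of the Laplacian applied to $f$ itself, of the shape $\Phi_s^{-1}(f)=c(s)\sum_{k\geq 0} c_k(s)\,(x_1^2+\cdots+x_n^2)^k\,\Delta^k f$, with coefficient bounds of order $|c_k(s)/c_0(s)|\lesssim \left(4^k\,k!\,\left(s+\frac{n-d}{2}\right)^k\right)^{-1}$ (this is the content of his Theorems 3.7 and 3.9, quoted as Proposition~\ref{prop:def.Phi} here), and then controls $\max_{S^{n-1}}|\Delta^k f|\leq n^k\, d(d-1)\cdots(d-2k+1)\,\max_{S^{n-1}}|f|$ by iterating Kellogg's inequality $\max_{S^{n-1}}|D_v p|\leq (\deg p)\max_{S^{n-1}}|p|$. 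All estimates thus act on $f$ directly, which is where $nd(d-1)$ enters. Summing the tail gives a perturbation of size at most $e^t-1$ with $t=\frac{nd(d-1)}{4\left(s+\frac{n-d}{2}\right)}$; the hypothesis on $s$ forces $t\leq \epsilon(f)\log 2$, and since $\epsilon(f)\leq 1$, convexity gives $e^t-1\leq 2^{\epsilon(f)}-1\leq \epsilon(f)$, so on the sphere the error cannot exceed $\epsilon(f)\max_{S^{n-1}}f=\min_{S^{n-1}}f$, yielding $\Phi_s^{-1}(f)\in P_{n,d}$ by homogeneity. So both the $4\log 2$ and the $nd(d-1)$ come from the Laplacian-series coefficients and Kellogg-type derivative bounds, not from eigenvalue asymptotics; if you want to salvage your version, the fix is to re-express your diagonal operator in the basis $\left\{(x_1^2+\cdots+x_n^2)^k\Delta^k\right\}$ and estimate against $f$ itself --- at which point you have reconstructed Reznick's proof.
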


We will focus throughout the proof on biforms of the following structure
\begin{align} \label{eq:def.F}
F(x;y)\mathrel{\mathop{:}}=\sum_{1\leq i,j \leq n} y_iy_jp_{ij}(x),
\end{align}
where $p_{ij}(x) \in H_{n,d}$, for all $i,j$, and some even integer $d$. Note that the polynomial $y^TH_f(x)y$ (where $f$ is some form) has this structure. We next present three lemmas which we will then build on to give the proof of Theorem \ref{th:r.sos.convex}.

\begin{lemma}\label{lem:F.sos}
	For a biform $F(x;y)$ of the structure in (\ref{eq:def.F}), define the operator $F(D;y)$ as $$F(D;y)=\sum_{ij} y_iy_jp_{ij}(D).$$ If $F(x;y)$ is positive semidefinite (i.e., $F(x;y)\geq 0,~ \forall x,y$), then, for any $s \geq 0$, the biform $$F(D;y)(x_1^2+\ldots+x_n^2)^s$$ is a sum of squares.
\end{lemma}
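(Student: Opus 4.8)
The plan is to reduce everything to Reznick's machinery (Propositions \ref{prop:sq2lin} and \ref{prop:diff2sum}) and then exploit the one feature that makes the biform case special: $F(x;y)$ is \emph{quadratic} in $y$, so its evaluation at any fixed $x$ is automatically a sum of squares in $y$, with no multiplier needed.

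First I would use Proposition \ref{prop:sq2lin} to write $\left(\sum_{i=1}^n x_i^2\right)^s = \sum_k \lambda_k (\alpha_k^T x)^{2s}$ with $\lambda_k \ge 0$ and $\alpha_k \in \mathbb{Z}^n$. Since $F(D;y) = \sum_{ij} y_iy_j\, p_{ij}(D)$ acts linearly, it suffices to apply each $p_{ij}(D)$ to the powers $(\alpha_k^T x)^{2s}$. As $p_{ij} \in H_{n,d}$, Proposition \ref{prop:diff2sum} (taking its $g=p_{ij}$ of degree $e=d$ and its form $h=\left(\sum_i x_i^2\right)^s$ of degree $2s$, so that its ``$d$'' is $2s-d$) gives
\[
p_{ij}(D)\left(\textstyle\sum_i x_i^2\right)^s = (2s)_d \sum_k \lambda_k\, p_{ij}(\alpha_k)\,(\alpha_k^T x)^{2s-d}.
\]
Multiplying by $y_iy_j$, summing over $i,j$, and interchanging the (finite) sums then yields
\[
F(D;y)\left(\textstyle\sum_i x_i^2\right)^s = (2s)_d \sum_k \lambda_k\, (\alpha_k^T x)^{2s-d}\, F(\alpha_k;y),
\]
where $F(\alpha_k;y)=\sum_{ij} p_{ij}(\alpha_k)\,y_iy_j$ is the substitution $x=\alpha_k$ into $F$. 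Note $(2s)_d$ automatically vanishes when $2s<d$ (a factor $2s-2s=0$ appears), matching the fact that a degree-$d$ operator annihilates a lower-degree form; that degenerate range is then trivially sos.

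The key step is the following observation about each summand. For fixed $k$, the quantity $F(\alpha_k;y)$ is a quadratic form $y^T M_k y$ in $y$ that is nonnegative, since $F(x;y)\ge 0$ for all $(x,y)$ and we have set $x=\alpha_k$. A nonnegative quadratic form has positive semidefinite matrix $M_k$ and is therefore a sum of squares of linear forms in $y$. Combining this with $\lambda_k\ge 0$, $(2s)_d\ge 0$, and the fact that $(\alpha_k^T x)^{2s-d}$ is a perfect square in $x$ (here $2s-d$ is even because $d$ is even), each term $\lambda_k (\alpha_k^T x)^{2s-d} F(\alpha_k;y)$ is a sum of squares of biforms in $(x,y)$ — concretely, writing $F(\alpha_k;y)=\sum_l (v_{kl}^T y)^2$, the term equals $\sum_l \left(\sqrt{\lambda_k}\,(\alpha_k^T x)^{(2s-d)/2}(v_{kl}^T y)\right)^2$. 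Summing over $k$ and scaling by $(2s)_d$ leaves the whole expression a sum of squares, as claimed.

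The step I expect to be the crux is not computationally heavy but is exactly the point of the lemma: recognizing that the quadratic dependence on $y$ forces $F(\alpha_k;y)\ge 0$ to be sos in $y$ with \emph{no} additional multiplier, which is what allows the Reznick-type factor $\left(\sum_i x_i^2\right)^s$ to act on the $x$ variables alone. This is precisely the feature that separates this biform result from a naive invocation of Theorem \ref{th:reznick}. The remaining work — tracking the falling-factorial constant $(2s)_d$, justifying the interchange of finite sums, and checking parities so that the relevant powers and products of squares are genuinely squares — is routine.
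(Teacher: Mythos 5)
Your proof is correct and follows essentially the same route as the paper's: expand $\left(\sum_i x_i^2\right)^s$ via Proposition~\ref{prop:sq2lin}, apply Proposition~\ref{prop:diff2sum} termwise to get $(2s)_d \sum_k \lambda_k (\alpha_k^T x)^{2s-d} F(\alpha_k;y)$, and observe that each $F(\alpha_k;y)$ is a nonnegative quadratic form in $y$, hence sos, so the whole expression is a sum of squares. Your added remarks on the degenerate case $2s<d$ and the parity of $2s-d$ are correct refinements of the same argument, not a different approach.
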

\begin{proof}
	Using Proposition \ref{prop:sq2lin}, we have $$(x_1^2+\ldots+x_n^2)^s=\sum_l \lambda_l(\alpha_{l1}x_1+\ldots \alpha_{ln}x_n)^{2s},$$ where $\lambda_l \geq 0$ and $\alpha_l \in \mathbb{Z}^n.$ Hence, applying Proposition \ref{prop:diff2sum}, we get
	\begin{align}
	F(D;y)(x_1^2+\ldots+x_n^2)^s &= \sum_{i,j} y_iy_j(p_{ij}(D)(x_1^2+\ldots+x_n^2)^s) \nonumber\\
	&= \sum_{i,j} y_iy_j \left( (2s)_d \sum_l \lambda_l p_{ij}(\alpha_l) (\alpha_l ^Tx)^{2s-d} \right) \nonumber\\
	&=(2s)_d \sum_l \lambda_l (\alpha_l ^Tx)^{2s-d} \sum_{i,j} y_iy_jp_{ij}(\alpha_l). \label{eq:proof.operator.sos}
	\end{align}
	Notice that $\sum_{i,j} y_iy_jp_{ij}(\alpha_l)$ is a quadratic form in $y$ which is positive semidefinite by assumption, which implies that it is a sum of squares (as a polynomial in $y$). Furthermore, as $\lambda_l \geq 0 ~\forall l$ and $(\alpha_l^Tx)^{2s-d}$ is an even power of a linear form, we have that $\lambda_l (\alpha_l^Tx)^{2s-d}$ is a sum of squares (as a polynomial in $x$). Combining both results, we get that (\ref{eq:proof.operator.sos}) is a sum of squares.
\end{proof}

We now extend the concept introduced by Reznick in Proposition \ref{prop:def.Phi} to biforms.
\begin{lemma} \label{lem:phi.def}
	For a biform $F(x;y)$ of the structure as in (\ref{eq:def.F}), we define the biform $\Psi_{s,x}(F(x;y))$ as
	\begin{align*}
	\Psi_{s,x}(F(x;y))\mathrel{\mathop{:}}=\sum_{i,j} y_iy_j \Phi_{s}(p_{ij}(x)),
	\end{align*}
where $\Phi_s$ is as in (\ref{eq:def.Phi}). Define
\begin{align*}
	\Psi_{s,x}^{-1}(F(x;y))\mathrel{\mathop{:}}=\sum_{i,j} y_iy_j \Phi_{s}^{-1}(p_{ij}(x)),
	\end{align*}
where $\Phi_s^{-1}$ is the inverse of $\Phi_s$. Then, we have
\begin{align}
F(D;y)(x_1^2+\ldots+x_n^2)^s=\Psi_{s,x} (F)(x_1^2+\ldots+x_n^2)^{s-d} \label{eq:def.Phi.s.x}
\end{align}
and
\begin{align}
\Psi_{s,x}(\Psi_{s,x}^{-1}(F))=F. \label{eq:Phi.s.x.inv}
\end{align}
\end{lemma}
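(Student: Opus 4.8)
The plan is to reduce both identities to the scalar facts about $\Phi_s$ recorded in Proposition \ref{prop:def.Phi}, exploiting that $\Psi_{s,x}$ acts separately on each coefficient $p_{ij}(x)$ of the biform while leaving the monomials $y_iy_j$ untouched.

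First I would prove (\ref{eq:def.Phi.s.x}) by a direct computation. Since $F(D;y)=\sum_{i,j}y_iy_j\,p_{ij}(D)$ and the factors $y_iy_j$ are inert under differentiation in $x$, one has
$$F(D;y)(x_1^2+\ldots+x_n^2)^s=\sum_{i,j}y_iy_j\left(p_{ij}(D)(x_1^2+\ldots+x_n^2)^s\right).$$
Applying the defining relation (\ref{eq:def.Phi}) of $\Phi_s$ to each scalar form $p_{ij}\in H_{n,d}$ rewrites the bracketed term as $\Phi_s(p_{ij})\,(x_1^2+\ldots+x_n^2)^{s-d}$; factoring out the common power $(x_1^2+\ldots+x_n^2)^{s-d}$ and recognizing $\sum_{i,j}y_iy_j\,\Phi_s(p_{ij})$ as $\Psi_{s,x}(F)$ by definition yields the claim. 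This step is pure bookkeeping and uses only $s\geq d$, so that Proposition \ref{prop:def.Phi} applies to each $p_{ij}$.

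For (\ref{eq:Phi.s.x.inv}) I would first observe that $\Psi_{s,x}^{-1}(F)=\sum_{i,j}y_iy_j\,\Phi_s^{-1}(p_{ij})$ is again a biform of the structure (\ref{eq:def.F}), now with coefficient forms $\Phi_s^{-1}(p_{ij})\in H_{n,d}$ (these lie in $H_{n,d}$ because $\Phi_s$ is a bijection of $H_{n,d}$). Applying $\Psi_{s,x}$ to it coefficient-wise and invoking the scalar identity $\Phi_s(\Phi_s^{-1}(f))=f$ from Proposition \ref{prop:def.Phi} then gives $\sum_{i,j}y_iy_j\,\Phi_s(\Phi_s^{-1}(p_{ij}))=\sum_{i,j}y_iy_j\,p_{ij}=F$.

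The only point requiring genuine care is that the representation $F=\sum_{i,j}y_iy_j\,p_{ij}(x)$ is not unique (for instance one may symmetrize the pair $(i,j)$), so I must check that $\Psi_{s,x}$ is well defined; this is where I expect the only real, if mild, obstacle to lie. It is resolved by linearity of $\Phi_s$ — inherited from linearity of $f\mapsto f(D)$ and of division by the fixed form $(x_1^2+\ldots+x_n^2)^{s-d}$. Concretely, if two representations of $F$ differ, their coefficient difference $q_{ij}$ satisfies $q_{ii}=0$ and $q_{ij}+q_{ji}=0$ for $i\neq j$, since these are the coefficients of the quadratic monomials $y_i^2$ and $y_iy_j$ in the vanishing biform $\sum_{i,j}y_iy_j\,q_{ij}=0$; collecting coefficients of the same monomials in $\sum_{i,j}y_iy_j\,\Phi_s(q_{ij})$ and using linearity of $\Phi_s$ shows that this biform vanishes as well, so the value $\Psi_{s,x}(F)$ is independent of the chosen representation (and likewise for $\Psi_{s,x}^{-1}$). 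At bottom, the lemma is the observation that Reznick's operator extends linearly from forms to the coefficients of a biform.
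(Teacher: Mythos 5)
Your proof is correct and follows essentially the same coefficient-wise computation as the paper: both identities reduce to applying the scalar relation (\ref{eq:def.Phi}) and $\Phi_s(\Phi_s^{-1}(f))=f$ from Proposition \ref{prop:def.Phi} to each $p_{ij}$, with the $y_iy_j$ factors carried along inertly. Your additional check that $\Psi_{s,x}$ is well defined across different representations of $F$ is a careful touch the paper omits, but it does not change the argument.
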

\begin{proof}
We start by showing that (\ref{eq:def.Phi.s.x}) holds:
\begin{align}
F(D;y)(x_1^2+\ldots+x_n^2)^s &= \sum_{i,j}y_i y_jp_{ij}(D)(x_1^2+\ldots+x_n^2)^s \nonumber \\
&\underset{\text{using (\ref{eq:def.Phi})}}{=} \sum_{i,j} y_iy_j\Phi_s(p_{ij}(x))(x_1^2+\ldots x_n^2)^{s-d} \nonumber \\
&=\Psi_{s,x}(F)(x_1^2+\ldots+x_n^2)^{s-d}. \nonumber
\end{align}
We now show that (\ref{eq:Phi.s.x.inv}) holds: $$\Psi_{s,x}(\Psi_{s,x}^{-1}(F))=\Psi_{s,x} \left( \sum_{i,j}y_iy_j \Phi_{s}^{-1}(p_{ij}(x))\right)=\sum_{i,j}y_iy_j \Phi_s \Phi_{s}^{-1}(p_{ij})=\sum_{i,j} y_iy_j p_{ij}=F.$$
\end{proof}

\begin{lemma} \label{lem:Phi.psd}
	For a biform $F(x;y)$ of the structure in (\ref{eq:def.F}), which is positive on the bisphere, let $$\eta(F)\mathrel{\mathop{:}}=\frac{\min\{F(x;y)~|~(x,y) \in S^{n-1} \times S^{n-1}\}}{\max\{F(x;y)~|~(x,y) \in S^{n-1} \times S^{n-1}\}}.$$
	If $s \geq \frac{nd(d-1)}{4\log(2)\eta(F)}-\frac{n-d}{2}$, then $\Psi_{s,x}^{-1}(F)$ is positive semidefinite.
\end{lemma}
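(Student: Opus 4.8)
The plan is to reduce the claim to Reznick's scalar result \propref{prop:pd.Phi} by freezing the $y$ variables. The key observation is that $\Phi_s^{-1}$ acts only on the $x$-forms and is \emph{linear}: since $f \mapsto f(D)$ is linear, the defining relation \eqref{eq:def.Phi} makes $\Phi_s$ linear, and hence so is its inverse (which exists by \propref{prop:def.Phi}). Consequently, for any fixed $y \in \mathbb{R}^n$, treating the scalars $y_iy_j$ as constants,
\[
\Psi_{s,x}^{-1}(F)(x;y) = \sum_{i,j} y_iy_j \Phi_s^{-1}(p_{ij})(x) = \Phi_s^{-1}\big(F(\cdot\,;y)\big)(x),
\]
where $F(\cdot\,;y) := \sum_{i,j} y_iy_j\, p_{ij}(x) \in H_{n,d}$ is the degree-$d$ form in $x$ obtained by fixing $y$. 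So proving that $\Psi_{s,x}^{-1}(F)$ is positive semidefinite on $\mathbb{R}^n \times \mathbb{R}^n$ amounts to showing that $\Phi_s^{-1}(F(\cdot\,;y))$ is a nonnegative form in $x$ for every fixed $y$.

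First I would treat the frozen forms. Because $F$ is positive on the bisphere, for every $y \in S^{n-1}$ the form $F(\cdot\,;y)$ is strictly positive on $S^{n-1}$, hence positive definite of degree $d$. Proposition \propref{prop:pd.Phi} then applies to $F(\cdot\,;y)$ and guarantees $\Phi_s^{-1}(F(\cdot\,;y)) \in P_{n,d}$ as soon as $s \geq \frac{nd(d-1)}{4\log(2)\,\epsilon(F(\cdot\,;y))} - \frac{n-d}{2}$.

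The crux is to make a single choice of $s$ work uniformly over all $y$. For this I would show $\epsilon(F(\cdot\,;y)) \geq \eta(F)$ for every $y \in S^{n-1}$. This follows because, with $y$ fixed on the sphere, ranging $x$ over $S^{n-1}$ explores only a slice of the bisphere: $\min_{x \in S^{n-1}} F(x;y) \geq \min_{(x,y') \in S^{n-1}\times S^{n-1}} F(x;y')$ while $\max_{x \in S^{n-1}} F(x;y) \leq \max_{(x,y') \in S^{n-1}\times S^{n-1}} F(x;y')$; dividing the two gives the bound. Since the threshold in \propref{prop:pd.Phi} is a decreasing function of $\epsilon$, we get $\frac{nd(d-1)}{4\log(2)\,\epsilon(F(\cdot\,;y))} - \frac{n-d}{2} \leq \frac{nd(d-1)}{4\log(2)\,\eta(F)} - \frac{n-d}{2}$, so the hypothesis $s \geq \frac{nd(d-1)}{4\log(2)\,\eta(F)} - \frac{n-d}{2}$ forces $s$ above Reznick's threshold for \emph{every} $y \in S^{n-1}$ at once. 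Hence $\Phi_s^{-1}(F(\cdot\,;y)) \geq 0$ for all $y \in S^{n-1}$.

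Finally I would extend from the sphere to all $y$ by homogeneity: for $y \neq 0$ we have $F(\cdot\,;y) = \|y\|^2\, F(\cdot\,;y/\|y\|)$, and linearity of $\Phi_s^{-1}$ gives $\Phi_s^{-1}(F(\cdot\,;y)) = \|y\|^2\, \Phi_s^{-1}(F(\cdot\,;y/\|y\|)) \geq 0$, while the case $y=0$ is trivial since $F(\cdot\,;0) \equiv 0$. This yields $\Psi_{s,x}^{-1}(F)(x;y) \geq 0$ for all $(x,y)$, i.e.\ positive semidefiniteness of the biform. I expect the only real (and still modest) obstacle to be the uniformity step — verifying $\epsilon(F(\cdot\,;y)) \geq \eta(F)$ and observing that monotonicity of the threshold in $\epsilon$ lets one $s$ serve all $y$; everything else is a direct transfer of Reznick's scalar statement through the linear operator $\Phi_s^{-1}$.
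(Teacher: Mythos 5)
Your proposal is correct and follows essentially the same route as the paper's proof: fix $y \in S^{n-1}$, apply Reznick's Proposition~\ref{prop:pd.Phi} to the positive definite form $F(\cdot\,;y) \in H_{n,d}$, and use $\epsilon(F(\cdot\,;y)) \geq \eta(F)$ together with monotonicity of the threshold so that one $s$ works uniformly in $y$. You are in fact slightly more careful than the paper, which leaves implicit both the linearity of $\Phi_s^{-1}$ (needed to identify $\Psi_{s,x}^{-1}(F)(\cdot\,;y)$ with $\Phi_s^{-1}(F(\cdot\,;y))$) and the homogeneity argument extending nonnegativity from $y \in S^{n-1}$ to all $y \in \mathbb{R}^n$.
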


\begin{proof}
Fix $y \in S^{n-1}$ and consider $F_y(x)=F(x;y)$, which is a positive definite form in $x$ of degree $d$. From Proposition \ref{prop:pd.Phi}, if $$s\geq \frac{nd(d-1)}{4\log(2)\epsilon(F_y)}-\frac{n-d}{2},$$ then $\Phi^{-1}_s(F_y)$ is positive semidefinite. As $\eta(F) \leq \epsilon(F_{y})$ for any $y \in S^{n-1}$, we have that if $$s \geq \frac{nd(d-1)}{4\log(2)\eta(F)}-\frac{n-d}{2},$$ then $\Phi_s^{-1}(F_y)$ is positive semidefinite, regardless of the choice of $y.$ Hence, $\Psi_{s,x}^{-1}(F)$ is positive semidefinite (as a function of $x$ and $y$).

\end{proof}

\begin{proof}[Proof of Theorem \ref{th:r.sos.convex}]
Let $F(x;y)=y^TH_f(x)y$, let $r\geq \frac{n(d-2)(d-3)}{4\log(2)\eta(f)}-\frac{n+d-2}{2}-d$, and let $$G(x;y)=\Psi_{r+d,x}^{-1}(F).$$ We know by Lemma \ref{lem:Phi.psd} that $G(x;y)$ is positive semidefinite. Hence, using Lemma \ref{lem:F.sos}, we get that $$G(D,y)(x_1^2+\ldots+x_n^2)^{r+d}$$ is sos. Lemma \ref{lem:phi.def} then gives us:
	\begin{align*}
	G(D;y)(x_1^2+\ldots+x_n^2)^{r+d}&\underset{\text{using } (\ref{eq:def.Phi.s.x})}{=}\Psi_{r+d,x}(G)
	(x_1^2+\ldots+x_n^2)^r\\
	&\underset{\text{using } (\ref{eq:Phi.s.x.inv})}{=}F(x;y)(x_1^2+\ldots+x_n^2)^r.
	\end{align*}
	
As a consequence, $F(x;y)(x_1^2+\ldots+x_n^2)^r$ is sos.
	
\end{proof}
The last theorem of this section shows that one cannot bound the integer $r$ in Theorem \ref{th:r.sos.convex} as a function of $n$ and $d$ only.
\begin{theorem}\label{th:unif.bnd}
For any integer $r \geq 0$, there exists a form $f$ in 3 variables and of degree 8 such that $H_f(x) \succ 0, \forall x \neq 0$, but $f$ is not $r$-sos-convex.
\end{theorem}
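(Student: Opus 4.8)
The plan is to argue by contradiction, combining a perturbation argument with the fact that the cone of sum of squares forms (of a fixed degree) is closed. Suppose, toward a contradiction, that the level $r$ \emph{could} be bounded uniformly, i.e.\ that there is an $N\geq 0$ such that \emph{every} form $h$ in $3$ variables and of degree $8$ with $H_h(x)\succ 0$ for all $x\neq 0$ is $N$-sos-convex. The goal is then to contradict this by producing a \emph{convex} ternary octic $g$ whose Hessian is only positive \emph{semi}definite and which is not $N$-sos-convex, and to ``promote'' such a $g$ to forms with positive definite Hessian. In fact, it suffices to exhibit, for each $N$, a convex ternary octic that is not $N$-sos-convex; a single convex form that is not $r$-sos-convex for \emph{any} $r$ would dispose of all $N$ at once. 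By the reduction below, the whole theorem collapses to this one ingredient.

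The promotion step is where the perturbation enters. Let $q(x)=(x_1^2+x_2^2+x_3^2)^4$. I would first check that $H_q(x)\succ 0$ for all $x\neq 0$: restricting $q$ to the line $x(t)=x+ty$ gives $\phi(t)^4$ with $\phi(t)=\|x+ty\|^2$ a positive, strictly convex quadratic, and $(\phi^4)''=12\phi^2(\phi')^2+4\phi^3\phi''$, so that $y^TH_q(x)y=(\phi^4)''(0)\geq 4\|x\|^6\cdot 2\|y\|^2>0$ whenever $x,y\neq 0$. Consequently, for the putative bad form $g$ and every $\epsilon>0$ we have $H_{g+\epsilon q}=H_g+\epsilon H_q\succ 0$ (a positive semidefinite matrix plus a positive definite one), so $g+\epsilon q$ falls under the assumed uniform bound and is $N$-sos-convex, i.e.\ $y^TH_{g+\epsilon q}(x)y\cdot(\sum_i x_i^2)^N$ is sos. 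These are forms of one fixed degree in $(x,y)$ whose coefficients depend affinely on $\epsilon$ and hence converge, as $\epsilon\downarrow 0$, to those of $y^TH_g(x)y\cdot(\sum_i x_i^2)^N$. Since the sos cone of that degree is closed, the limit is sos, so $g$ is $N$-sos-convex, contradicting the choice of $g$. This shows no uniform $N$ exists, which is exactly the statement of the theorem.

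The argument thus reduces to its one genuinely nontrivial ingredient: a convex ternary octic $g$ that is not $N$-sos-convex. Observe first that any such $g$ must have a \emph{singular} Hessian somewhere on the sphere: if $y^TH_g(x)y>0$ on $S^{n-1}\times S^{n-1}$, then Theorem \ref{th:r.sos.convex} already guarantees $g$ is $r$-sos-convex for some finite $r$. Hence I would look for a convex $g$ whose Hessian biform $F(x;y)=y^TH_g(x)y$ is positive semidefinite but vanishes at some $(\bar x,\bar y)$ with $\bar x,\bar y\neq 0$, with the zero arranged so as to obstruct every sos certificate of $F\cdot(\sum_i x_i^2)^r$. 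A natural starting point is the convex-but-not-sos-convex ternary octic of \cite[Theorem 3.2]{AAA_PP_not_sos_convex_journal} already used in the remark above, whose Hessian has a non-sos diagonal entry $H_f^{1,1}$; the task is to exhibit (a variant of) such a form whose Hessian biform has a genuine real zero at which no power of $\sum_i x_i^2$ can restore the sum of squares property.

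The main obstacle is precisely this construction and its verification: certifying that $F\cdot(\sum_i x_i^2)^r$ is \emph{not} sos for \emph{every} $r$, rather than merely for a single level. In contrast to the strictly positive case covered by Theorem \ref{th:r.sos.convex}, there is no quantitative ratio $\eta(f)$ to exploit; instead one must extract a \emph{local} obstruction at the real zero $(\bar x,\bar y)$ of $F$ — for instance, arguing that in any sos decomposition of $F\cdot(\sum_i x_i^2)^r$ each square summand is forced to vanish at $(\bar x,\bar y)$ to an order incompatible with the order of vanishing of $F$ there, so that matching low-order Taylor coefficients (equivalently, a Gram-matrix/localization comparison at $(\bar x,\bar y)$) is impossible. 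Producing such a zero-localized obstruction that holds simultaneously for all $r$ is the crux of the proof; once it is established, the perturbation-and-closedness mechanism of the second paragraph yields the theorem at once.
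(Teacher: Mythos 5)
Your reduction step is correct as far as it goes: since $r$-sos-convexity implies $(r+1)$-sos-convexity, the negation of Theorem \ref{th:unif.bnd} is indeed a uniform bound $N$; your verification that $q(x)=(\sum_i x_i^2)^4$ has $H_q(x)\succ 0$ for $x\neq 0$ is fine; and the limit argument works because the sos cone in fixed degree is closed and the coefficients of $y^TH_{g+\epsilon q}(x)y\cdot(\sum_i x_i^2)^N$ depend affinely on $\epsilon$. But the proof has a genuine gap exactly where you flag it: you never produce the convex ternary octic $g$ that is not $N$-sos-convex, and this ingredient is not available off the shelf. Worse, your preferred shortcut --- a single convex form that is not $r$-sos-convex for \emph{any} $r$ --- is essentially an open question adjacent to the paper's Open Problem 1: Theorem \ref{th:r.sos.convex} only covers forms whose Hessian biform is positive on the bi-sphere, and the paper nowhere asserts (nor is it known) that a convex form failing this must fail $r$-sos-convexity for all $r$. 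Your sketched ``zero-localized obstruction'' is also not a proof outline one could routinely complete: a real zero of the psd biform $F$ is by itself no obstruction to $F\cdot(\sum_i x_i^2)^r$ being sos (sos forms vanish at points, with every square summand vanishing there), so the entire burden sits on an order-of-vanishing argument valid simultaneously for all $r$, which you do not supply.

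The paper avoids all of this because its statement lets $f$ depend on $r$, so no uniformity or contradiction argument is needed. It takes the explicit convex trivariate octic of \cite{AAA_PP_not_sos_convex_journal}, which has positive definite Hessian and whose $(1,1)$ Hessian entry is $1$-sos but not sos, and applies the scaling $g_s(x_1,x_2,x_3)=f(x_1,sx_2,sx_3)$. Scaling preserves positive definiteness of the Hessian, and a theorem of Reznick \cite{Reznick1} states that for any psd-not-sos form $p$ and any $r$ there exists $s$ such that $(\sum_i x_i^2)^r\, p(x_1,sx_2,\ldots,sx_n)$ is not sos; applying this to $H_f^{(1,1)}$ and restricting the biform to $y=(1,0,0)^T$ shows $g_s$ is not $r$-sos-convex. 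Note that this mechanism (an explicit form with a psd-not-sos Hessian entry, plus Reznick's no-uniform-denominator theorem) is precisely what would be needed to fill the hole in your argument --- and once one has it, your perturbation-and-closedness detour becomes unnecessary, since the scaled forms already have positive definite Hessians. As written, your proposal is an unproved reduction, not a proof.
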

\begin{proof}
Consider the trivariate octic:
	\begin{align*}
	f(x_1,x_2,x_3)&= 32x_1^8+118x_1^6x_2^2+40x_1^6x_3^2+25x_1^4x_2^2x_3^2-35x_1^4x_3^4+3x_1^2x_2^4x_3^2-16x_1^2x_2^2x_3^4+24x_1^2x_3^6\\
	&+16x_2^8+44x_2^6x_3^2+70x_2^4x_3^4+60x_2^2x_3^6+30x_3^8.
	\end{align*}
	It is shown in \cite{AAA_PP_not_sos_convex_journal} that $f$ has positive definite Hessian, and that the $(1,1)$ entry of $H_f(x)$, which we will denote by $H_f^{(1,1)}(x)$, is 1-sos but not sos. We will show that for any $r \in \mathbb{N}$, one can find $s \in \mathbb{N} \backslash \{0\}$ such that $$g_s(x_1,x_2,x_3)=f(x_1,sx_2,sx_3)$$ satisfies the conditions of the theorem.
	
	We start by showing that for any $s$, $g_s$ has positive definite Hessian. To see this, note that for any $(x_1,x_2,x_3) \neq 0, (y_1,y_2,y_3) \neq 0$, we have:
	$$(y_1, y_2,y_3) H_{g_s}(x_1,x_2,x_3)(y_1,y_2,y_3)^T=(y_1,sy_2,sy_3) H_f(x_1,sx_2,sx_3)(y_1,sy_2,sy_3)^T.$$
	As $y^TH_f(x)y>0$ for any $x \neq 0, y\neq 0$, this is in particular true when $x=(x_1, sx_2,sx_3)$ and when $y=(y_1, sy_2,sy_3)$, which gives us that the Hessian of $g_s$ is positive definite for any $s \in \mathbb{N} \backslash \{0\}.$

We now show that for a given $r\in \mathbb{N}$, there exists $s \in \mathbb{N}$ such that $(x_1^2+x_2^2+x_3^2)^r y^TH_{g_s}(x)y$ is not sos. We use the following result from \cite[Theorem 1]{Reznick1}: for any positive semidefinite form $p$ which is not sos, and any $r\in \mathbb{N}$, there exists $s \in \mathbb{N} \backslash \{0\}$ such that $(\sum_{i=1}^n x_i^2)^r\cdot p(x_1,sx_2,\ldots,sx_n)$ is not sos. As $H_f^{(1,1)}(x)$ is 1-sos but not sos, we can apply the previous result. Hence, there exists a positive integer $s$ such that $$(x_1^2+x_2^2+x_3^2)^r \cdot H_f^{(1,1)}(x_1,sx_2,sx_3)=(x_1^2+x_2^2+x_3^2)^r \cdot H_{g_s}^{(1,1)}(x_1,x_2,x_3)$$
is not sos. This implies that $(x_1^2+x_2^2+x_3^2)^r \cdot y^TH_{g_s}(x)y$ is not sos. Indeed, if $(x_1^2+x_2^2+x_3^2)^r \cdot y^TH_{g_s}(x)y$ was sos, then $(x_1^2+x_2^2+x_3^2)^r \cdot y^TH_{g_s}(x)y$ would be sos with $y=(1,0,0)^T.$ But, we have $$(x_1^2+x_2^2+x_3^2)^r \cdot (1,0,0)H_{g_s}(x)(1,0,0)^T=(x_1^2+x_2^2+x_3^2)^r \cdot H_{g_s}^{(1,1)}(x),$$
which is not sos. Hence, $(x_1^2+x_2^2+x_3^2)^r \cdot y^TH_{g_s}(x)y$ is not sos, and $g$ is not $r$-sos-convex.
\end{proof}

\begin{remark}\label{rem:pd.hess.str.conv}
Any form $f$ with $H_f(x) \succ 0, \forall x \neq 0$ is strictly convex but the converse is not true.

To see this, note that any form $f$ of degree $d$ with a positive definite Hessian is convex (as $H_f(x)\succeq 0, \forall x$) and positive definite (as, from a recursive application of Euler's theorem on homogeneous functions, $f(x)=\frac{1}{d(d-1)}x^TH_f(x)x$). From the proof of Theorem \ref{th:norm.str.conv}, this implies that $f$ is strictly convex.

To see that the converse statement is not true, consider the strictly convex form $f(x_1,x_2)\mathrel{\mathop{:}}=x_1^4+x_2^4$. We have $$H_f(x)=12\cdot \begin{bmatrix} x_1^2 & 0 \\ 0 & x_2^2 \end{bmatrix}$$ which is not positive definite e.g., when $x=(1,0)^T$.
\end{remark}

\subsubsection{Optimizing over a subset of polynomial norms with $r$-sos-convexity}

In the following theorem, we give a semidefinite programming-based hierarchy for optimizing over the set of forms $f$ with $H_f(x)\succ 0$, $\forall x \neq 0.$ Comparatively to Theorem \ref{th:test.poly.norm}, this theorem allows us to impose as a constraint that the $d^{th}$ root of a form be a norm, rather than simply testing whether it is. This comes at a cost however: in view of Remark \ref{rem:pd.hess.str.conv} and Theorem \ref{th:norm.str.conv}, we are no longer considering all polynomial norms, but a subset of them whose $d^{th}$ power has a positive definite Hessian.

\begin{theorem}\label{th:opt.poly.norms}
	Let $f$ be a degree-$d$ form. Then $H_f(x) \succ 0, \forall x\neq 0$ if and only if $\exists c>0, r \in \mathbb{N}$ such that $f(x)-c(\sum_i x_i^2)^{d/2}$ is $r$-sos-convex. Furthermore, this condition can be imposed using semidefinite programming.
\end{theorem}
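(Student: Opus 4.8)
The plan is to set $g_c(x) := f(x) - c\left(\sum_i x_i^2\right)^{d/2}$, which is again a form of degree $d$ (recall $d$ is even, so the subtracted term is a genuine polynomial), and to compare the Hessians of the two pieces. First I would record the Hessian of $p(x):=\left(\sum_i x_i^2\right)^{d/2}$: a direct computation gives $H_p(x) = d\|x\|^{d-2} I + d(d-2)\|x\|^{d-4}xx^T$, whose eigenvalues are $d(d-1)\|x\|^{d-2}$ (in the direction $x$) and $d\|x\|^{d-2}$ (orthogonal to $x$), so $H_p(x)\succ 0$ for every $x\neq 0$. With this in hand the \emph{if} direction is immediate: if $g_c$ is $r$-sos-convex then it is convex, so $H_{g_c}(x)=H_f(x)-cH_p(x)\succeq 0$ for all $x$; since $c>0$ and $H_p(x)\succ0$ for $x\neq0$, this forces $H_f(x)\succeq cH_p(x)\succ0$ for all $x\neq0$.

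For the \emph{only if} direction, assume $H_f(x)\succ0$ for all $x\neq0$. The idea is to choose $c$ small enough that the Hessian of $g_c$ stays positive on the bi-sphere, and then invoke Theorem~\ref{th:r.sos.convex}. By homogeneity it suffices to work on the compact set $S^{n-1}\times S^{n-1}$, so $m:=\min\{y^TH_f(x)y \mid (x,y)\in S^{n-1}\times S^{n-1}\}$ is attained and is strictly positive, while $M:=\max\{y^TH_p(x)y\mid (x,y)\in S^{n-1}\times S^{n-1}\}=d(d-1)$. Taking $c:=m/(2M)>0$, for every $(x,y)$ on the bi-sphere one gets $y^TH_{g_c}(x)y = y^TH_f(x)y - c\,y^TH_p(x)y \geq m-cM = m/2 > 0$. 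Hence $g_c$ is a degree-$d$ form whose Hessian biform is positive on the bi-sphere, i.e. $\eta(g_c)>0$, and Theorem~\ref{th:r.sos.convex} (applied to $g_c$) produces an $r\in\mathbb{N}$ for which $g_c$ is $r$-sos-convex. This closes the equivalence.

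For the semidefinite programming claim, I would fix $r$ and $d$ and write the condition out through Definition~\ref{def:r.sos.convex}: $g_c$ is $r$-sos-convex iff $y^TH_{g_c}(x)y\cdot\left(\sum_i x_i^2\right)^r$ is sos. The key observation — and the whole point of introducing $r$-sos-convexity rather than using Theorem~\ref{th:test.poly.norm} — is that $g_c$, and therefore $H_{g_c}(x)$ and $y^TH_{g_c}(x)y$, depends \emph{affinely} on the coefficients of $f$ and on the scalar $c$, while the multiplier $\left(\sum_i x_i^2\right)^r$ is a \emph{fixed} polynomial. Thus no products of decision variables occur, the sos constraint becomes an affine semidefinite constraint, and one may optimize a linear objective over the coefficients of $f$ (and over $c\geq0$) subject to it; the strict positivity $c>0$ can be enforced exactly by the same reformulation used in the remark following Theorem~\ref{th:test.poly.norm} (replace $c\left(\sum_i x_i^2\right)^{d/2}$ by $\left(\sum_i x_i^2\right)^{d/2}$ and scale $f$ by a variable $\gamma\geq0$).

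I expect the only real obstacle to be the \emph{only if} direction, and specifically the reduction to Theorem~\ref{th:r.sos.convex}: one must verify that a \emph{single} constant $c$ can be chosen uniformly so that $H_{g_c}$ remains positive definite across \emph{all} directions $x$ on the sphere, which is exactly what the compactness argument together with the positive definiteness of $H_p$ delivers. Once $\eta(g_c)>0$ is secured, the existence of $r$ is handed to us by Theorem~\ref{th:r.sos.convex}, and the remaining linearity/SDP bookkeeping is routine.
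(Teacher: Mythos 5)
Your proof is correct and follows essentially the same route as the paper's: the same decomposition $g_c = f - c\bigl(\sum_i x_i^2\bigr)^{d/2}$, the same choice of constant $c = m/(2d(d-1))$ obtained by bounding the Hessian biform of $\bigl(\sum_i x_i^2\bigr)^{d/2}$ by $d(d-1)$ on the bi-sphere (you do this via the explicit eigenvalues of $H_p$, the paper via Cauchy--Schwarz, an immaterial difference), the same appeal to Theorem~\ref{th:r.sos.convex}, and the same observation that for fixed $r$ the sos constraint is affine in the coefficients of $f$ and in $c$. One cosmetic caveat: the $\gamma$-scaling trick you cite for enforcing $c>0$ works as stated only when $f$ is fixed; when $f$ is itself a decision variable it would reintroduce a bilinearity, and the paper's remark instead normalizes to $c \geq 1$ by absorbing the scaling into the free coefficients of $f$.
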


\begin{proof}
	
	If there exist $c>0, r\in \mathbb{N}$ such that $g(x)=f(x)-c(\sum_i x_i^2)^{d/2}$ is $r$-sos-convex, then $y^TH_g(x)y \geq 0$, $\forall x,y.$ As the Hessian of $(\sum_i x_i^2)^{d/2}$ is positive definite for any nonzero $x$ and as $c>0$, we get $H_f(x)\succ 0$, $\forall x\neq 0.$

	Conversely, if $H_f(x)\succ 0$, $\forall x\neq 0$, then $y^TH_f(x)y>0$ on the bisphere (and conversely). Let $$f_{\min}\mathrel{\mathop{:}}= \min_{||x||=||y||=1} y^TH_f(x)y.$$ We know that $f_{\min}$ is attained and is positive. Take $c\mathrel{\mathop{:}}=\frac{f_{\min}}{2d(d-1)}$ and consider $$g(x)\mathrel{\mathop{:}}= f(x)-c(\sum_i x_i^2)^{d/2}.$$
	Then $$y^TH_g(x)y=y^TH_f(x)y-c \cdot \left( d(d-2)(\sum_i x_i^2)^{d/2-2} (\sum_i x_iy_i)^2 + d\sum_i (x_i^2)^{d/2-1} (\sum_i y_i^2) \right).$$
	Note that, by Cauchy-Schwarz, we have $(\sum_i x_i y_i)^2 \leq ||x||^2||y||^2$. If $||x||=||y||=1$, we get $$y^TH_g(x)y\geq y^TH_f(x)y-c(d(d-1))>0.$$
	Hence, $H_g(x)\succ 0, \forall x\neq 0$ and there exists $r$ such that $g$ is $r$-sos-convex from Theorem \ref{th:r.sos.convex}.

	For fixed $r$, the condition that there be $c>0$ such that $f(x)-c(\sum_i x_i^2)^{d/2}$ is $r$-sos-convex can be imposed using semidefinite programming. This is done by searching for coefficients of a polynomial $f$ and a real number $c$ such that
	\begin{equation}\label{eq:SDP.opt.poly.norm}
	\begin{aligned}
	&y^TH_{f-c (\sum_i x_i^2)^{d/2}}y \cdot (\sum_i x_i^2)^r \text{ sos}\\
	&c\geq 0.
	\end{aligned}
	\end{equation}
	Note that both of these conditions can be imposed using semidefinite programming.
\end{proof}

\begin{remark}
	Note that we are not imposing $c>0$ in the above semidefinite program. As mentioned in Section~\ref{sec:test}, this is because in practice the solution returned by interior point solvers will be in the interior of the feasible set.
	
	In the special case where $f$ is completely free\footnote{This is the case of our two applications in Section \ref{sec:apps}.} (i.e., when there are no additional affine conditions on the coefficients of $f$), one can take $c \geq 1$ in (\ref{eq:SDP.opt.poly.norm}) instead of $c \geq 0$. Indeed, if there exists $c>0$, an integer $r$, and a polynomial $f$ such that $f -c(\sum_i x_i^2)^{d/2}$ is $r$-sos-convex, then $\frac1c f$ will be a solution to (\ref{eq:SDP.opt.poly.norm}) with $c \geq 1$ replacing $c \geq 0$.
\end{remark}

%

\section{Applications} \label{sec:apps}

\subsection{Norm approximation and regression}\label{sec:norm.reg}

In this section, we study the problem of approximating a (non-polynomial) norm by a polynomial norm. We consider two different types of norms: $p$-norms with $p$ noneven (and greater than 1) and gauge norms with a polytopic unit ball. For $p$-norms, we use as an example $||(x_1,x_2)^T||=(|x_1|^{7.5}+|x_2|^{7.5})^{1/7.5}$. For our polytopic gauge norm, we randomly generate an origin-symmetric polytope and produce a norm whose 1-sublevel corresponds to that polytope. This allows us to determine the value of the norm at any other point by homogeneity (see \cite[Exercise 3.34]{BoydBook} for more information on gauge norms, i.e., norms defined by convex, full-dimensional, origin-symmetric sets). To obtain our approximations, we proceed in the same way in both cases. We first sample $N=200$ points $x_1,\ldots,x_N$ {\ghh uniformly at random} on the sphere $S^{n-1}$. We then solve the following optimization problem with $d$ fixed:
\begin{equation}\label{eq:opt.norm.reg}
\begin{aligned}
&\min_{f\in H_{2,d}} &&\sum_{i=1}^{N} (||x_i||^d-f(x_i))^2\\
&\text{s.t. } &&f \text{ sos-convex}.
\end{aligned}
\end{equation}
Problem (\ref{eq:opt.norm.reg}) can be written as a semidefinite program as the objective is a convex quadratic in the coefficients of $f$ and the constraint has a semidefinite representation as discussed in Section \ref{sec:sos.review}. The solution $f$ returned is guaranteed to be convex. Moreover, any sos-convex form is sos (see \cite[Lemma 8]{Helton_Nie_SDP_repres}), which implies that $f$ is nonnegative. One can numerically check to see if the optimal polynomial is in fact positive definite (for example, by checking the eigenvalues of the Gram matrix of a sum of squares decomposition of $f$). If that is the case, then, by Theorem \ref{th:norm.str.conv}, $f^{1/d}$ is a norm. Futhermore, note that we have
\begin{align*}
\left(\sum_{i=1}^N (||x_i||^d-f(x_i))^2 \right)^{1/d} &\geq \frac{N^{1/d}}{N} \sum_{i=1}^N (||x_i||^d-f(x_i))^{2/d}\\
&\geq \frac{N^{1/d}}{N} \sum_{i=1}^N (||x_i||-f^{1/d}(x_i))^2,
\end{align*}
where the first inequality is a consequence of concavity of $z \mapsto z^{1/d}$ and the second is a consequence of the inequality $|x-y|^{1/d} \geq ||x|^{1/d}-|y|^{1/d}|$. This implies that if the optimal value of (\ref{eq:opt.norm.reg}) is equal to $\epsilon$, then the sum of the squared differences between $||x_i||$ and $f^{1/d}(x_i)$ over the sample is less than or equal to $N \cdot (\frac{\epsilon}{N})^{1/d}$.

It is worth noting that in our example, we are actually searching over the entire space of polynomial norms of a given degree. Indeed, as $f$ is bivariate, it is convex if and only if it is sos-convex \cite{AAA_PP_table_sos-convexity}. In Figure \ref{fig:norm.approx}, we have drawn the 1-level sets of the initial norm (either the $p$-norm or the polytopic gauge norm) and the optimal polynomial norm obtained via (\ref{eq:opt.norm.reg}) with varying degrees $d$. Note that when $d$ increases, the approximation improves.

\begin{figure}[h]
	\centering
	\begin{subfigure}[b]{0.49\textwidth}
		\includegraphics[width=\textwidth]{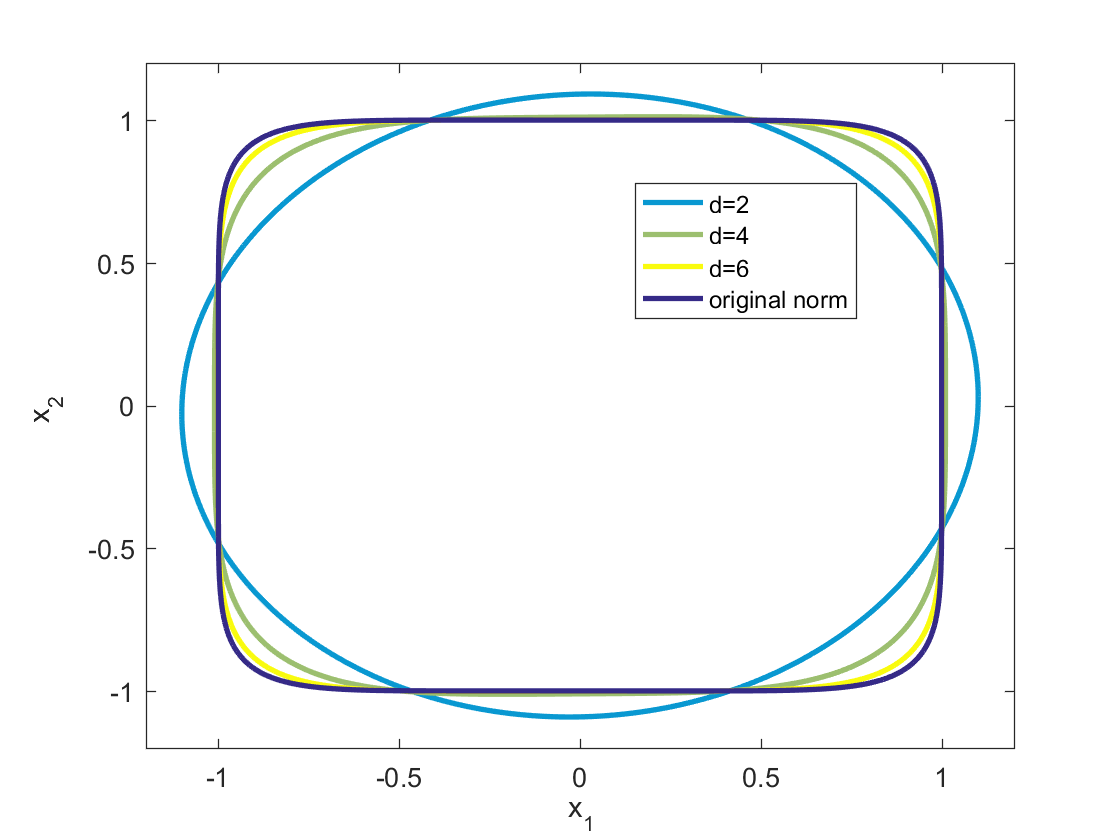}
		\caption{p-norm approximation}
	\end{subfigure}
	~ 
	\begin{subfigure}[b]{0.49\textwidth}
		\includegraphics[width=\textwidth]{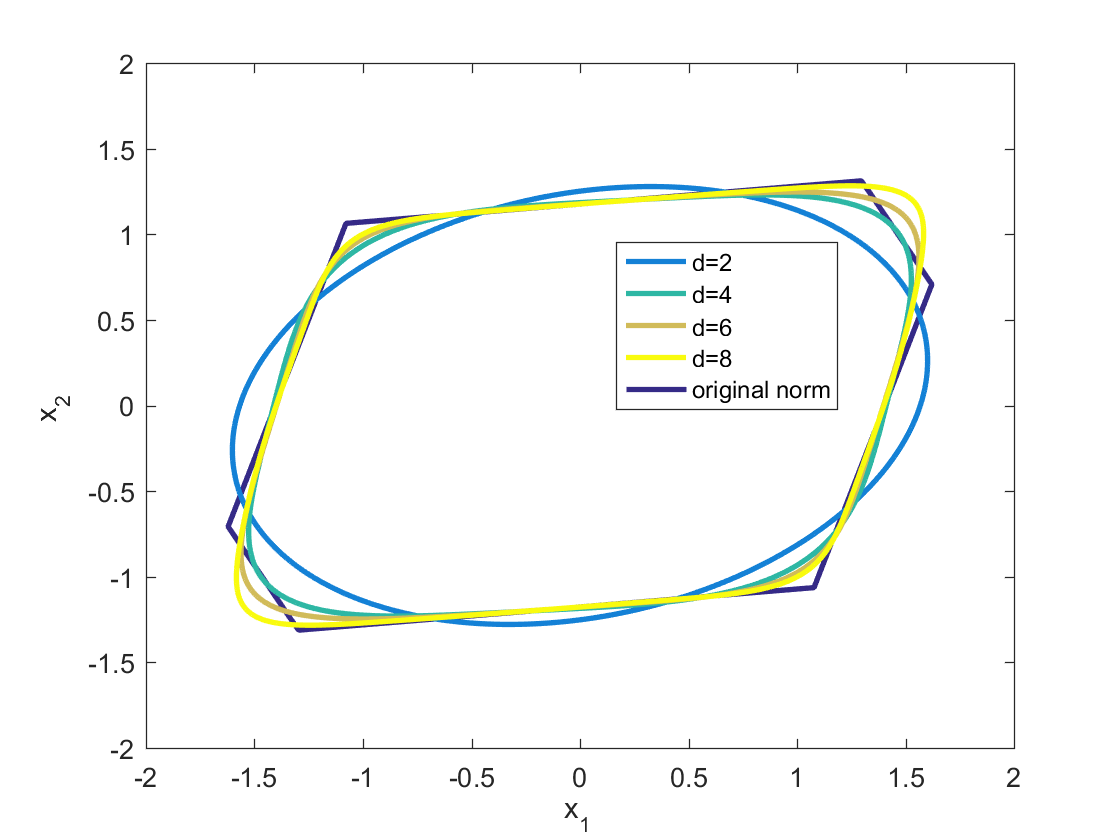}
		\caption{Polytopic norm approximation}
	\end{subfigure}
	\caption{Approximation of non-polynomial norms by polynomial norms}\label{fig:norm.approx}
\end{figure}

A similar method could be used for \emph{norm regression}. In this case, we would have access to data points $x_1,\ldots,x_N$ corresponding to noisy measurements of an underlying unknown norm function. We would then solve the same optimization problem as the one given in (\ref{eq:opt.norm.reg}) to obtain a polynomial norm that most closely approximates the noisy data.

\subsection{Joint spectral radius and stability of linear switched systems}\label{sec:JSR.comp}

As a second application, we revisit a result from one of the authors and Jungers from \cite{sosconvex_Lyap_cdc,Ahmadi_Jungers} on finding {\rev upper bounds} on the joint spectral radius of a finite set of matrices. We first review a few notions relating to dynamical systems and linear algebra.
The spectral radius $\rho$ of a matrix $A$ is defined as $$\rho(A)=\lim_{k \rightarrow \infty} ||A^k||^{1/k}.$$ The spectral radius happens to coincide with the eigenvalue of $A$ of largest magnitude. Consider now the discrete-time linear system $x_{k+1}=Ax_k$, where $x_k$ is the $n \times 1$ state vector of the system at time $k$. This system is said to be \emph{asymptotically stable} if for any initial starting state $x_0 \in \mathbb{R}^n$, $x_k \rightarrow 0,$ when $k \rightarrow \infty.$ A well-known result connecting the spectral radius of a matrix to the stability of a linear system states that the system $x_{k+1}=Ax_k$ is asymptotically stable if and only if $\rho(A)<1$.

In 1960, Rota and Strang introduced a generalization of the spectral radius to a \emph{set} of matrices. The \emph{joint spectral radius (JSR)} of a set of matrices $\mathcal{A} \mathrel{\mathop{:}}=\{A_1,\ldots,A_m\}$ is defined as
\begin{align}\label{eq:JSR.def}
\rho(\mathcal{A})\mathrel{\mathop{:}}=\lim_{k\rightarrow \infty} \max_{\sigma \in \{1,\ldots,m\}^k}||A_{\sigma_k} \ldots A_{\sigma_1}||^{1/k}.
\end{align}
Analogously to the case where we have just one matrix, the value of the joint spectral radius can be used to determine stability of a certain type of system, called \emph{a switched linear system.} A switched linear system models an uncertain and time-varying linear system, i.e., a system described by the dynamics
\begin{align}\label{eq:switch}
x_{k+1}=A_{k}x_k,
\end{align}
where the matrix $A_k$ varies at each iteration within the set $\mathcal{A}$. As done previously, we say that a switched linear system is asymptotically stable if {\rev $x_k \rightarrow 0$} when $k \rightarrow \infty$, for any starting state $x_0 \in \mathbb{R}^n$ and any sequence of products of matrices in $\mathcal{A}$. One can establish that the switched linear system $x_{k+1}=A_{k}x_k$ is {\rev asymptotically} stable if and only if $\rho(\mathcal{A})<1$ \cite{Raphael_Book}.

{\rev Switched linear systems are typically used to model situations where the dynamics of a system are thought to be linear, but the matrix $A_k \in \mathcal{A}$ associated to the linear dynamics $x_{k+1}=A_kx_k$ is unknown and time-varying. Consider, e.g., the task of stabilizing a drone in a windy environment. By linearizing its dynamics around a desired equilibrium point, the behavior of the drone can be modeled locally by a linear dynamical system. However, as this linear dynamical system is unknown due to parameter uncertainty and modeling error, and time-varying due to the effect of the wind, the drone's behavior is better modeled by a switched linear system. 
		
Consequently, a natural question is whether one can efficiently test if $\rho(\mathcal{A})<1$ and hence determine if the corresponding switched linear system is asymptotically stable. Unlike the setting of linear systems, where one can decide whether the spectral radius of a matrix is less than one in polynomial time, it is not known whether the problem of testing if $\rho(\mathcal{A})<1$ is even decidable. The related question of testing whether $\rho(\mathcal{A})\leq 1$ is known to be undecidable, already when $\mathcal{A}$ contains only 2 matrices \cite{BlTi2}. With this result in mind, it comes as no surprise that, e.g., stability of a switched linear system is not implied by all individual matrices in $\mathcal{A}$ having spectral radius less than one. This is easy to see on an example: consider the set of matrices $\mathcal{A}$ given by
	$$A_1=\begin{bmatrix} 0 & 2 \\ 0 & 0\end{bmatrix} \text{ and } A_2=\begin{bmatrix} 0 & 0 \\ 2 & 0 \end{bmatrix}.$$
Observe that the spectral radii of $A_1$ and $A_2$ are zero, which is less than one. However $$A_1A_2=\begin{bmatrix} 4 & 0 \\ 0 & 0 \end{bmatrix}$$ and so $\rho(\mathcal{A})$ is lower bounded by $2>1$, and the switched linear system is not stable. }
%
%
%
%

An active area of research has consequently been to obtain sufficient conditions for the JSR to be strictly less than one, which, for example, can be checked using convex optimization. The theorem that we revisit below is a result of this type. We start first by recalling a classical theorem regarding stability of a linear system.
\begin{theorem}[see, e.g., Theorem 8.4 in \cite{hespanha2009linear}]\label{th:AAA.Jungers}
	Let $A \in \mathbb{R}^{n \times n}$. Then, $\rho(A)<1$ if and only if there exists a contracting quadratic norm; i.e., a function $V:\mathbb{R}^n \rightarrow \mathbb{R}$ of the form $V(x)=\sqrt{x^TQx}$ with $Q\succ 0$, such that $V(Ax)<V(x), \forall x\neq 0.$
\end{theorem}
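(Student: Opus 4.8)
The plan is to reduce the contraction condition to a single matrix inequality and then prove each implication separately. First I would observe that, since $V(x)=\sqrt{x^TQx}$ with $Q\succ 0$ and since $A^TQA\succeq 0$, the requirement $V(Ax)<V(x)$ for all $x\neq 0$ is equivalent (by squaring both nonnegative sides) to $(Ax)^TQ(Ax)<x^TQx$ for all $x\neq 0$, i.e., to the \emph{discrete-time Lyapunov inequality} $A^TQA-Q\prec 0$. Thus the theorem reduces to: $\rho(A)<1$ if and only if there exists $Q\succ 0$ with $A^TQA\prec Q$.

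For the ($\Leftarrow$) direction, I would assume such a $Q$ exists and let $\lambda\in\mathbb{C}$ be any eigenvalue of $A$ with eigenvector $v\neq 0$ (possibly complex). Extending the real quadratic forms to Hermitian forms on $\mathbb{C}^n$, negative definiteness of $A^TQA-Q$ gives $v^*(A^TQA-Q)v<0$. Using $Av=\lambda v$ and $A^*=A^T$ (as $A$ is real), the first term equals $(Av)^*Q(Av)=|\lambda|^2\,v^*Qv$, so $|\lambda|^2 v^*Qv<v^*Qv$. Since $v^*Qv>0$, this forces $|\lambda|<1$; as $\lambda$ was an arbitrary eigenvalue, $\rho(A)<1$.

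For the ($\Rightarrow$) direction, I would assume $\rho(A)<1$ and construct $Q$ explicitly as the Gramian-type series $Q\mathrel{\mathop{:}}=\sum_{k=0}^\infty (A^k)^TA^k$. The $k=0$ term shows $Q\succeq I\succ 0$, and an index-shift (telescoping) computation gives $A^TQA=\sum_{k=1}^\infty (A^k)^TA^k=Q-I$, so that $A^TQA-Q=-I\prec 0$. Translating back through the reduction above yields $V(Ax)<V(x)$ for all $x\neq 0$, as desired.

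The main obstacle is the convergence argument in the ($\Rightarrow$) direction: one must pass from the limit definition $\rho(A)=\lim_{k\to\infty}\|A^k\|^{1/k}<1$ to a uniform geometric bound of the form $\|A^k\|\le C\mu^k$ for some $\mu\in(\rho(A),1)$ and constant $C$, which guarantees that the partial sums are Cauchy and hence that $Q$ is well-defined. This is precisely a quantitative form of Gelfand's formula. A secondary subtlety is the careful bookkeeping with complex eigenvectors in the ($\Leftarrow$) direction, since the hypothesis $V(Ax)<V(x)$ is stated for real $x$ but must be applied to the (in general complex) eigenvectors of $A$, which is why I pass to the Hermitian extensions of the forms.
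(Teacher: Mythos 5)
The paper does not prove this theorem at all: it is stated as a classical result with a citation to Theorem 8.4 of \cite{hespanha2009linear}, so there is no in-paper proof to compare against. Your proof is correct and is the standard Lyapunov-equation argument one finds in such references --- the reduction of the contraction condition to $A^TQA-Q\prec 0$, the Hermitian extension to handle complex eigenvectors in the ($\Leftarrow$) direction, and the series $Q=\sum_{k\ge 0}(A^k)^TA^k$ with convergence via Gelfand's formula and the telescoping identity $A^TQA=Q-I$ in the ($\Rightarrow$) direction are all sound and complete.
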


The next theorem (from \cite{sosconvex_Lyap_cdc,Ahmadi_Jungers}) can be viewed as an extension of Theorem \ref{th:AAA.Jungers} to the joint spectral radius of a finite set of matrices. It is known that the existence of a contracting quadratic norm is no longer necessary for stability in this case. This theorem {\rev shows} however that the existence of a contracting polynomial norm is.

\begin{theorem}[adapted from \cite{sosconvex_Lyap_cdc,Ahmadi_Jungers}, Theorem 3.2 ]\label{th:poly.norms.JSR}
	Let $\mathcal{A}\mathrel{\mathop{:}}=\{A_1,\ldots,A_m\}$ be a family of $n \times n$ matrices. Then, $\rho(A_1,\ldots,A_m)<1$ if and only if there exists a contracting polynomial norm; i.e., a function $V(x)=f^{1/d}(x)$, where $f$ is an n-variate sos-convex and positive definite form of degree $d$, such that $V(A_ix)<V(x),~\forall x\neq 0$ and $\forall i=1,\ldots,m.$
\end{theorem}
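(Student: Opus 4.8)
The plan is to prove the two implications separately, letting the approximation result of Theorem~\ref{th:approx.poly.norm.sphere} carry the weight of the harder ``only if'' direction.

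\textbf{The easy direction (a contracting polynomial norm forces $\rho<1$).} Suppose $V=f^{1/d}$ with $f$ sos-convex and positive definite satisfies $V(A_ix)<V(x)$ for all $x\neq 0$ and all $i$. Since sos-convexity implies convexity, Theorem~\ref{th:norm.str.conv} guarantees that $V$ is a genuine norm, hence continuous. First I would note that each ratio $x\mapsto V(A_ix)/V(x)$ is continuous and strictly below $1$ on the compact Euclidean unit sphere, so its maximum over $i$ and over the sphere is some $\gamma<1$; by $1$-homogeneity this gives $V(A_ix)\le\gamma V(x)$ for all $x$ and $i$. Iterating along any word yields $V(A_{\sigma_k}\cdots A_{\sigma_1}x)\le\gamma^k V(x)$, and equivalence of norms on $\mathbb{R}^n$ (say $c_1\|x\|\le V(x)\le c_2\|x\|$ for a sub-multiplicative norm $\|\cdot\|$) converts this into $\|A_{\sigma_k}\cdots A_{\sigma_1}\|\le (c_2/c_1)\gamma^k$. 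Taking $k$-th roots and passing to the limit in \eqref{eq:JSR.def} gives $\rho(\mathcal{A})\le\gamma<1$.

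\textbf{The hard direction ($\rho<1$ forces existence).} I would proceed in two stages. First, fix $\gamma$ with $\rho(\mathcal{A})<\gamma<1$ and build an a priori non-polynomial contracting norm. Since $\rho(\mathcal{A})<\gamma$, there is a constant $C$ and a $\gamma'\in(\rho(\mathcal{A}),\gamma)$ with $\max_{\sigma\in\{1,\ldots,m\}^k}\|A_{\sigma_k}\cdots A_{\sigma_1}\|\le C\gamma'^{\,k}$, so
\[
V(x)=\sup_{k\ge 0}\ \max_{\sigma\in\{1,\ldots,m\}^k}\ \gamma^{-k}\,\|A_{\sigma_k}\cdots A_{\sigma_1}x\|
\]
is finite and, being a supremum of norms sandwiched between two multiples of $\|\cdot\|$, is itself a norm. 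Reindexing words of length $k$ prefixed by $A_i$ as words of length $k+1$ shows $V(A_ix)\le\gamma\,V(x)$ for every $i$, so $V$ is strictly contracting.

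Second, I would invoke Theorem~\ref{th:approx.poly.norm.sphere} to upgrade $V$ to an sos-convex polynomial norm. For each even $d$ it produces a convex positive definite form $f_d$, realized as a nonnegative sum of $d$-th powers of linear forms (hence sos-convex), with
\[
c_d\,V(x)\le f_d^{1/d}(x)\le V(x),\qquad c_d:=\tfrac{d}{n+d}\Bigl(\tfrac{n}{n+d}\Bigr)^{n/d}\xrightarrow[d\to\infty]{}1.
\]
Setting $W=f_d^{1/d}$ and combining the sandwich with the contraction of $V$,
\[
W(A_ix)\le V(A_ix)\le\gamma\,V(x)\le \tfrac{\gamma}{c_d}\,W(x).
\]
Because $c_d\to 1>\gamma$, for all large enough $d$ one has $\gamma/c_d<1$, whence $W(A_ix)<W(x)$ for all $x\neq 0$ and all $i$; this $W$ is the desired contracting polynomial norm with $f_d$ sos-convex and positive definite.

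\textbf{The main obstacle.} The delicate point is the second stage. Theorem~\ref{th:approx.poly.norm.sphere} controls $f_d^{1/d}$ only from below, and by the \emph{shrinking} factor $c_d$, which could in principle destroy strict contraction. The crux is that the contraction rate $\gamma$ is a fixed, scale-invariant consequence of the gap $\rho(\mathcal{A})<1$ and does \emph{not} deteriorate with $d$, whereas $c_d\to 1$; choosing $d$ large enough that $c_d>\gamma$ is exactly what reconciles the two, preserving the strict inequality while replacing $V$ by an sos-convex polynomial norm.
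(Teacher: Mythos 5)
Your proof is correct. A point of comparison worth noting up front: the paper itself gives no proof of Theorem~\ref{th:poly.norms.JSR} --- it is stated as adapted from the cited work of Ahmadi and Jungers --- so there is no in-text argument to match line by line; your derivation is a valid self-contained substitute, and it follows the same overall strategy as the cited original: a converse-Lyapunov (Rota--Strang) construction of a contracting, a priori non-polynomial norm, followed by approximation with a form that is a sum of even powers of linear forms. Two key steps check out. First, sos-convexity of $f_d$: for $f_d(x)=\sum_j \lambda_j (a_j^Tx)^d$ with $\lambda_j\ge 0$ and $d$ even, one has $y^TH_{f_d}(x)y = d(d-1)\sum_j \lambda_j \bigl((a_j^Tx)^{(d-2)/2}(a_j^Ty)\bigr)^2$, a sum of squares; this is exactly the remark the paper makes after Definition~\ref{def:r.sos.convex}, and part (ii) of Theorem~\ref{th:approx.poly.norm.sphere} licenses your choice of representation. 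Second, your identification of the crux is right: the approximation constant $c_d = \frac{d}{n+d}\left(\frac{n}{n+d}\right)^{n/d}$ tends to $1$ while the contraction factor $\gamma$ is fixed below $1$ independently of $d$, so choosing $d$ even with $c_d>\gamma$ preserves strict contraction; this quantitative degree selection is precisely what the explicit sandwich \eqref{eq:norm.approx.level.set} buys over a merely qualitative approximation statement. Two small points of polish: in your first stage, the individual terms $\gamma^{-k}\|A_{\sigma_k}\cdots A_{\sigma_1}x\|$ are only seminorms (products of the $A_i$ may be singular), so you should invoke the $k=0$ term $\|x\|$ to get positive definiteness of the supremum; and you should state explicitly that $d$ ranges over even integers, as Theorem~\ref{th:approx.poly.norm.sphere} requires.
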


We remark that in \cite{ahmadi2016lower}, the authors show that the degree of $f$ cannot be bounded as a function of $m$ and $n$. This is expected from the undecidability result mentioned before.

\begin{example}
	We consider a modification of Example 5.4. in \cite{ahmadi2011analysis} as an illustration of the previous theorem. We would like to show that the joint spectral radius of the two matrices $$A_1=\frac{1}{3.924}\begin{bmatrix} -1 & -1 \\ 4 & 0 \end{bmatrix}, \quad A_2=\frac{1}{3.924}\begin{bmatrix} 3 & 3 \\ -2 & 1 \end{bmatrix}$$ is strictly less that one.
	
	To do this, we search for a nonzero form $f$ of degree $d$ such that
	\begin{equation} \label{eq:JSR.opt.prob}
	\begin{aligned}
	&f -(\sum_{i=1}^n x_i^2)^{d/2}\text{ sos-convex}\\
	&f(x)-f(A_ix)- (\sum_{i=1}^n x_i^2)^{d/2} \text{ sos}, \text{ for } i=1,2.
	\end{aligned}
	\end{equation}
	If problem (\ref{eq:JSR.opt.prob}) is feasible for some $d$, then $\rho(A_1,A_2)<1$. A quick computation using the software package YALMIP \cite{yalmip} and the SDP solver MOSEK \cite{mosek} reveals that, when $d=2$ or $d=4$, problem (\ref{eq:JSR.opt.prob}) is infeasible. When $d=6$ however, the problem is feasible and we obtain a polynomial norm $V=f^{1/d}$ whose 1-sublevel set is the outer set plotted in Figure \ref{fig:level.sets}. We also plot on Figure \ref{fig:level.sets} the images of this 1-sublevel set under $A_1$ and $A_2$. Note that both sets are included in the 1-sublevel set of $V$ as expected. From Theorem \ref{th:poly.norms.JSR}, the existence of a polynomial norm implies that $\rho(A_1,A_2)<1$ and hence, the pair $\{A_1,A_2\}$ is asymptotically stable.

	\begin{figure}[h]
		\centering
		\includegraphics[scale=0.3]{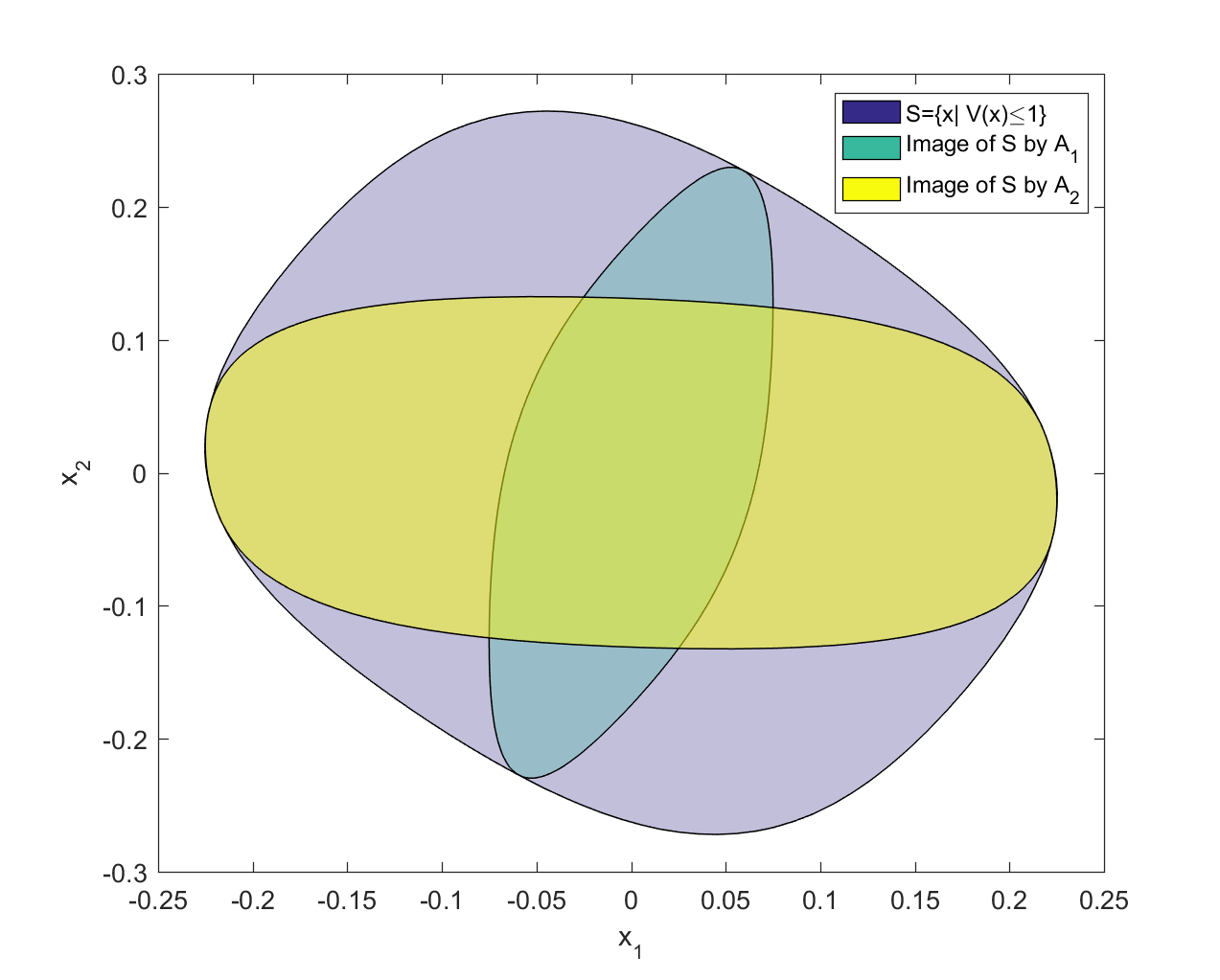}
		\caption{Image of the sublevel set of $V$ under $A_1$ and $A_2$}
		\label{fig:level.sets}
	\end{figure}

\end{example}

\begin{remark}
	As mentioned previously, problem (\ref{eq:JSR.opt.prob}) is infeasible for $d=4$. Instead of pushing the degree of $f$ up to 6, one could wonder whether the problem would have been feasible if we had asked that $f$ of degree $d=4$ be $r$-sos-convex for some fixed $r \geq 1$. As mentioned before, in the particular case where $n=2$ (which is the case at hand here), the notions of convexity and sos-convexity coincide; see \cite{AAA_PP_table_sos-convexity}. As a consequence, one can only hope to make problem (\ref{eq:JSR.opt.prob}) feasible by increasing the degree of $f$.
\end{remark}

\section{Future directions}\label{sec:open.problem}

{\ghh In this paper, we provided semidefinite programming-based hierarchies for certifying that the $d^{th}$ root of a given degree-$d$ form is a polynomial norm (Section \ref{sec:test}), and for optimizing over the set of forms with positive definite Hessians (Section \ref{sec:opt}).} A clear gap emerged between forms which are strictly convex and those which have a positive definite Hessian, the latter being a sufficient (but not necessary) condition for the former. This leads us to consider the following two open problems.

\begin{openprob}
	Does there exist a family of cones $K^{r}_{n,2d}$ that have the following two properties: (i) for each $r$, optimization of a linear function over $K_{n,2d}^r$ can be carried out with semidefinite programming, and (ii) every strictly convex form $f$ in $n$ variables and degree $2d$ belongs to $K_{n,2d}^r$ for some $r$? We have shown a weaker result, namely the existence of a family of cones that verify (i) and a modified version of (ii), where strictly convex forms are replaced by forms with a positive definite Hessian.

\end{openprob}

\begin{openprob}
Helton and Nie have shown in \cite{Helton_Nie_SDP_repres} that one can optimize a linear function over sublevel sets of forms that have positive definite Hessians with semidefinite programming. Is the same statement true for sublevel sets of all polynomial norms?
\end{openprob}

%
On the application side, it might be interesting to investigate how one can use polynomial norms to design \emph{regularizers} in machine learning applications. Indeed, a very popular use of norms in optimization is as regularizers, with the goal of imposing additional structure (e.g., sparsity or {\rev low rank}) on optimal solutions. One could imagine using polynomial norms to design regularizers that are based on the data at hand in place of more generic regularizers such as the 1-norm. Regularizer design is a problem that has already been considered (see, e.g., \cite{bach2012optimization,venkat}) but not using polynomial norms. This can be worth exploring as we have shown that polynomial norms can approximate any norm with arbitrary accuracy, while remaining differentiable everywhere (except at the origin), which can be beneficial for optimization purposes.

\subsubsection*{Acknowledgement}
{\ghh The authors would like to thank an anonymous referee for suggesting the
proof of the first part of Theorem \ref{th:approx.poly.norm.sphere}, which improves our previous statement by quantifying the quality of the approximation as a function of $n$ and $d$, and two other anonymous referees for constructive comments that considerably helped improve the draft.}

\bibliographystyle{amsplain}
\bibliography{pablo_amirali}

\end{document}